\newtheoremstyle{kai}
{3pt}{3pt}{}{}{\bfseries}{.}{.5em}{}
\def\EquationsBySection{\def\theequation
{\thesection.\arabic{equation}}%
\@addtoreset{equation}{section}}
\newcommand\old[1]{}
\newcommand{\pend}{\hfill \thicklines \framebox(6.6,6.6)[l]{}}
\renewenvironment{proof}{\noindent {\it  Proof.} \rm}{\pend}
\newtheorem{theorem}{Theorem}[section]
\newtheorem{lemma}{Lemma}[section]
\newtheorem{proposition}{Proposition}[section]
\newtheorem{definition}{Definition}[section]
\newtheorem{example}{Example}[section]
\journal{arXiv.org}
\begin{document}

\begin{frontmatter}

\title{\bf\Large Stationary Solutions of Neutral Stochastic Partial\\ Differential Equations with Delays in \\ the Highest-Order Derivatives}

%\author{{\bf Kai Liu}}
%\address{
%Department of Mathematical Sciences, School of Physical Sciences,
%The University of Liverpool, Liverpool, L69 7ZL, U.K.}

\author{
{\bf Kai Liu}}
\address{
$^{a)}$ College of Mathematical Sciences, Tianjin Normal University, Tianjin, 300387, P.R. China, and $^{b)}$ Department of Mathematical Sciences, School of Physical Sciences,
The University of Liverpool, Liverpool, L69 7ZL, U.K. E-mail: k.liu@liverpool.ac.uk}

 \tnotetext[tref1]{The author is grateful to the Tianjin Thousand Talents Plan for its financial support.}

\begin{abstract}
%% Text of abstract

In this work, we shall consider the existence and uniqueness of stationary solutions to stochastic partial functional differential equations with additive noise in which a neutral type of delay is explicitly presented. We are especially concerned about those delays appearing in both spatial and temporal derivative terms in which the coefficient operator  under spatial variables may take the same form as the  infinitesimal generator of the equation. We establish the stationary property of the neutral system under investigation  by focusing on distributed delays. In the end, an illustrative example is analyzed to explain  the theory in this work.\\ \\
\end{abstract}
\end{frontmatter}

\noindent {\bf Keywords:} Stochastic functional differential equation of neutral type; Strongly continuous or $C_0$ semigroup; Resolvent operator; Stationary solution.

\noindent {\bf 2010 Mathematics Subject Classification(s):} 60H15, 60G15, 60H05.

% \linenumbers

\newpage
\section{\large Introduction}

First of all, let us consider some simple stochastic systems to motivate our theory in this work. Let $w(t)$, $t\ge 0$, be a standard real Brownian motion defined on some probability space $(\Omega, {\mathscr F}, {\mathbb P})$. Consider the following stochastic partial differential equation
\begin{equation}
\label{11/08/2013(3007089)}
\begin{cases}
dy(t, \xi) =\displaystyle\frac{\partial^2}{\partial \xi^2}y(t, \xi)dt + b(\xi) dw(t),\,\,\,\,t\ge 0,\,\,\,\,\xi\in (0, \pi),\\
y(t, 0)=y(t, \pi)=0,\,\,\,t\ge 0,\\
y(0, \cdot)=y_0(\cdot)\in L^2(0, \pi),
\end{cases}
\end{equation}
where $b(\cdot)\in L^2(0, \pi)$. It is well-known  (see, e.g., Pr\'ev\^ot and R\"ockner \cite{cpmr2007}) that equation (\ref{11/08/2013(3007089)}) has a unique stationary solution. That is, there exists a random initial $y_0\in L^2(0, \pi)$ such that the corresponding (strong) solution $y(t, y_0)$, $t\ge 0$, is stationary, i.e., for any $t\ge 0$, $t_k\ge 0$ and Borel set $\Gamma_k\in {\mathscr B}(H)$, the Borel $\sigma$-field on $H$, $k=1,\ldots, n$,
\[
{\mathbb P}\big\{y(t+t_k, y_0)\in \Gamma_k,\,\,k=1,\ldots, n\big\}= {\mathbb P}\big\{y(t_k, y_0)\in \Gamma_k,\,\,k=1,\ldots, n\big\}.\]
Moreover, this stationary solution is unique in the sense that any two stationary solutions of (\ref{11/08/2013(3007089)}) have the same finite dimensional distribution.

 Next, let $r>0$ and consider a time delay version of (\ref{11/08/2013(3007089)}) in the form
\begin{equation}
\label{05/03/2017}
\begin{cases}
 dy(t, \xi)=\displaystyle\frac{\partial^2}{\partial \xi^2}y(t, \xi)dt  + \displaystyle\int^0_{-r} \beta(\theta) \frac{\partial^2}{\partial \xi^2}y(t+\theta, \xi)d\theta dt + b(\xi)  dw(t),\,\,\,\,t\ge 0,\,\,\,\,\,\xi\in (0, \pi),\\
y(t, 0)=y(t, \pi)=0,\,\,\,t\ge 0,\\
y(0, \xi)=\phi_0(\xi),\,\,\,\,y(\theta, \xi)=\phi_1(\theta, \xi),\,\,\,\theta \in [-r, 0],\,\,\,\,\xi\in (0, \pi),
\end{cases}
\end{equation}
where $\beta:\, [-r, 0]\to {\mathbb R}$ is some measurable function. The novelty in equation (\ref{05/03/2017}) is that a time delay appears in the highest-order, i.e., second order,  derivative term which usually leads to an unbounded operator from an advanced analysis viewpoint. On the other hand, due to the time delay in (\ref{05/03/2017}), it is essential to set up proper initial data, e.g., $\phi_0\in L^2(0, \pi)$ and $\phi_1\in L^2([-r, 0], H^1_0(0, \pi))$ where $H^1_0(0, \pi)$ is the classical Sobolev space, to secure a solution, and further a stationary solution, to equation (\ref{05/03/2017}). As a matter of fact, it has been shown in Liu \cite{kl2016(1)} that if
\[
\|\beta\|_{L^1([-r, 0], {\mathbb R})}<1,\]
 there would exist a unique stationary solution to (\ref{05/03/2017}).

In this work, we are interested in a neutral type of version of (\ref{05/03/2017}) in the form
\begin{equation}
\label{11/08/2013(30070999)}
\begin{cases}
 d\Big(y(t, \xi) - \displaystyle\int^0_{-r} \gamma(\theta)y(t+\theta, \xi)d\theta\Big)=\displaystyle\frac{\partial^2}{\partial \xi^2}y(t, \xi)dt  + \displaystyle\int^0_{-r} \beta(\theta) \frac{\partial^2}{\partial \xi^2}y(t+\theta, \xi)d\theta dt\\
  \hskip 200pt +\, b(\xi)  dw(t),\,\,\,\,t\ge 0,\,\,\,\,\xi\in (0, \pi),\\
y(t, 0)=y(t, \pi)=0,\,\,\,t\ge 0,\\
y(0, \xi)=\phi_0(\xi),\,\,\,\,y(\theta, \xi)=\phi_1(\theta, \xi),\,\,\,\theta \in [-r, 0],\,\,\,\xi\in (0, \pi),
\end{cases}
\end{equation}
or equivalently, the form
 \begin{equation}
\label{11/08/2013(3007099989)}
\begin{cases}
 d\Big(y(t, \xi) - \displaystyle\int^0_{-r} \gamma(\theta)y(t+\theta, \xi)d\theta\Big)=\displaystyle\frac{\partial^2}{\partial \xi^2}\Big(y(t, \xi) - \displaystyle\int^0_{-r} \gamma(\theta)y(t+\theta, \xi)d\theta\Big) dt\\
   \hskip 130pt + \displaystyle\int^0_{-r} (\beta(\theta)+\gamma(\theta)) \frac{\partial^2}{\partial \xi^2}y(t+\theta, \xi)d\theta dt + b(\xi)  dw(t),\,\,\,\,t\ge 0,\\
y(t, 0)=y(t, \pi)=0,\,\,\,t\ge 0,\\
y(0, \xi)=\phi_0(\xi),\,\,\,\,y(\theta, \xi)=\phi_1(\theta, \xi),\,\,\,\theta \in [-r, 0],\,\,\,\xi\in(0, \pi),
\end{cases}
\end{equation}
where $\gamma:\, [-r, 0]\to {\mathbb R}$ is a measurable function. As a direct result of our theory, we shall show later on that if
\[
2\|\gamma\|_{L^1([-r, 0], {\mathbb R})} + \|\beta\|_{L^1([-r, 0], {\mathbb R})}<1,\]
there would exist a unique stationary solution to (\ref{11/08/2013(30070999)}).

The organization of this  work is as follows. In Section 2, we first develop a $C_0$-semigroup theory so as to lift up the original time delay system into a non time delay one.  To identify a stationary solution for our system, it is important to know when the associated ``lift-up" solution semigroup  is exponentially stable. To this end, we establish some stability results by means of a spectrum analysis method in Section 3. In contrast with point delay situation, it turns out in Section 4 that we can apply a norm continuity result of $C_0$-semigroups in \cite{kl2016(2)}
to our case to locate a stationary solution for the system under consideration. Last, we shall apply the results established in the work to a concrete  example to illustrate our theory.

\section{\large Strongly Continuous Semigroup}

For arbitrary Banach spaces $X$ and $Y$ with their respective norms $\|\cdot\|_X$ and $\|\cdot\|_Y$, we always denote by ${\mathscr L}(X, Y)$ the space of all bounded, linear operators from $X$ into $Y$. If $X=Y$, we simply write ${\mathscr L}(X)$ for ${\mathscr L}(X, X)$.  Let $V$ be a separable Hilbert space and  $a:\, V\times V\to {\mathbb R}$  a  bilinear form satisfying the so-called
G\r{a}rding's inequalities
\begin{equation}
\label{29/01/2013(390)}
|a(x, y)|\le \beta\|x\|_V\|y\|_V,\hskip 15pt a(x, x)\le -\alpha\|x\|^2_V,\hskip 20pt \forall\, x,\,y\in V,
\end{equation}
for some constants $\beta>0$, $\alpha>0$.  In association with the form $a(\cdot, \cdot)$, let $A$ be a linear operator defined by
\[
a(x, y)= \langle x, Ay\rangle_{V, V^*},\hskip 15pt x,\,\,y\in V,
\]
where $V^*$ is the dual space of $V$ and $\langle \cdot, \cdot\rangle_{V, V^*}$ is the dual pairing between $V$ and $V^*$.
Then $A\in {\mathscr L}(V, V^*)$ and $A$ generates an analytic semigroup $e^{tA}$, $t\ge 0$, on $V^*$. We also introduce the standard interpolation  Hilbert space $H = (V, V^*)_{1/2, 2}$, which is  described by
\[
H = \Big\{x\in V^*: \int^\infty_0 \|Ae^{tA}x\|^2_{V^*}dt<\infty\Big\}\]
with inner product
\[
\langle x, y\rangle_H = \langle x, y\rangle_{V^*}  + \int^\infty_0 \langle Ae^{tA}x, Ae^{tA}y\rangle_{V^*}dt,\hskip 15pt x,\,\,y\in V^*.\]
We identity the dual $H^*$ of $H$ with $H$, then it is easy to see that
\begin{equation}
\label{13/04/16(100)}
V\hookrightarrow H = H^*\hookrightarrow V^*
\end{equation}
where the imbedding $\hookrightarrow$ is dense and continuous with  $\|x\|^2_{H}\le \nu\|x\|^2_{V}$, $x\in V,$ for some constant $\nu >0$.
Hence, $\langle x, Ay\rangle_H = \langle x, Ay\rangle_{V, V^*}$ for all $x\in V$ and $y\in V$ with $Ay\in H$. Moreover,  for any $T\ge 0$ it is well known   (see \cite{jllem72}) that
\[
L^2([0, T], V)\cap W^{1, 2}([0, T], V^*)\subset C([0, T], H)\]
where $W^{1, 2}([0, T], V^*)$ is the Sobolev space consisting of all functions $y:\, [0, T]\to V^*$ such that $y$ and its first order distributional derivative are in $L^2([0, T], V^*)$ and $C([0, T], H)$ is the space of all continuous functions from $[0, T]$ into $H$, respectively.
It can be also shown (see, e.g., \cite{nt79}) that the semigroup $e^{tA}$, $t\ge 0$, is  bounded and analytic on both $V^*$ and $H$ such that $e^{tA}:\, V^*\to V$ for each $t>0$ and  for some constant $M>0$,
\[
\|e^{tA}\|_{{\mathscr L}(V^*)}\le M,\hskip 20pt
\|e^{tA}\|_{{\mathscr L}(H)}\le e^{-\alpha t}\hskip 15pt \hbox{ for all}\hskip 15pt t\ge 0.\]

Let $T\ge 0$ and  $f\in L^2([0, T], V^*)$. Consider an abstract evolution equation in $V^*$ as follows:
\begin{equation}
\label{25/04/16(20)}
\begin{cases}
dx(t) = Ax(t)dt +f(t)dt,\hskip 20pt t\in [0, T],\\
x(0)=\phi_0.
\end{cases}
\end{equation}
The proofs of the following result are referred to Theorems 2.3 and 2.4 in  \cite{dbgkkes84}.

\begin{theorem}
\label{23/05/16(10)}
\begin{enumerate}
\item[(i)] If function $f\in L^2([0, T], V^*)$, then
\[
(e^A*f)(t) := \int^t_0 e^{(t-s)A}f(s)ds\in L^2([0, T], V)\cap W^{1, 2}([0, T], V^*),\]
and
\[
\|e^A*f\|_{L^2([0, T], V)}\vee \|e^A*f\|_{W^{1, 2}([0, T], V^*)}\le C_1\|f\|_{L^2([0, T], V^*)}\]
where $C_1=C_1(T)>0$ and $a\vee b := \max\{a, b\}$ for any $a,\,b\in {\mathbb R}$.
\item[(ii)] If $\phi_0\in H$, then function $t\to e^{tA}\phi_0$ belongs to $L^2([0, T], V)\cap W^{1, 2}([0, T], V^*)$ and
\[
\|e^{tA}\phi_0\|_{L^2([0, T], V)}\le \|e^{tA}\phi_0\|_{W^{1, 2}([0, T], V^*)}\le C_2\|\phi_0\|_H\]
where $C_2=\max\{M, \sqrt{T}, 1\}$.
\item[(iii)] If $f\in L^2([0, T], V^*)$ and $\phi_0\in H$, then equation (\ref{25/04/16(20)}) has a unique solution given by
\[
x(t) = e^{tA}\phi_0 + \int^t_0 e^{(t-s)A}f(s)ds.\]
In addition, there exists a constant $C_0>0$ such that $x\in C([0, T], H)$ and
\[
\begin{split}
C_0\|x\|_{C([0, T], H)}\le \|x\|_{L^2([0, T], V)}\vee \|x\|_{W^{1, 2}([0, T], V^*)}\le C_1\|f\|_{L^2([0, T], V^*)}+ C_2\|\phi_0\|_H.
\end{split}
\]
\item[(iv)] If $f\in W^{1, 2}([0, T], V^*)$, $\phi_0\in V$ and $A\phi_0 + f(0)\in H$,  then the solution $x$ of
(\ref{25/04/16(20)}) satisfies
\[
x\in W^{1, 2}([0, T], V)\cap W^{2, 2}([0, T], V^*)\subset C^1([0, T], H),\]
\[
\|x'\|_{L^2([0, T], V)}\vee \|x'\|_{W^{1, 2}([0, T], V^*)}\le C_1\|f'\|_{L^2([0, T], V^*)} + C_2\|A\phi_0 + f(0)\|_H,\]
and
\[
x'(t) = e^{tA}(A\phi_0 + f(0)) + \int^t_0 e^{(t-s)A}f'(s)ds,\hskip 15pt t\in [0, T].\]
\item[(v)]
If $f\in L^2([0, T], V)$, $\phi_0\in V$ and $A\phi_0\in H$, then the solution $x$ of  (\ref{25/04/16(20)}) satisfies
\[
Ax\in L^2([0, T], V)\cap W^{1, 2}([0, T], V^*)\subset C([0, T], H),\]
\[
\|Ax\|_{L^2([0, T], V)} + \|Ax\|_{W^{1, 2}([0, T], V^*)}\le C_1\|Af\|_{L^2([0, T], V^*)} + C_2\|A\phi_0\|_H\]
and  for any $t\ge 0$,
\[
Ax(t) = e^{tA}A\phi_0 + \int^t_0 e^{(t-s)A}Af(s)ds.\]
\end{enumerate}
\end{theorem}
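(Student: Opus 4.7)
My plan is to treat (iii) as the central assertion, derive (i) and (ii) from its two building blocks (the $\phi_0=0$ and $f=0$ cases, respectively), and then obtain (iv)--(v) by applying (iii) to a suitably transformed equation. Throughout, the engine will be the Lions-type energy identity $\tfrac{1}{2}\tfrac{d}{dt}\|x(t)\|_H^2 = \langle x'(t),x(t)\rangle_{V^*,V}$, valid whenever $x\in L^2([0,T],V)$ and $x'\in L^2([0,T],V^*)$, coupled with the definition $a(x,y)=\langle x,Ay\rangle_{V,V^*}$ and G\r{a}rding's inequality (\ref{29/01/2013(390)}).

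For (i), set $y=e^A*f$, so that formally $y'=Ay+f$ with $y(0)=0$. Testing against $y$ in the $V^*,V$ duality and using G\r{a}rding,
\begin{equation*}
\tfrac{1}{2}\tfrac{d}{dt}\|y(t)\|_H^2 = a(y(t),y(t)) + \langle f(t),y(t)\rangle_{V^*,V} \le -\alpha\|y(t)\|_V^2 + \|f(t)\|_{V^*}\|y(t)\|_V.
\end{equation*}
Young's inequality absorbs half of the first term, and integration on $[0,T]$ yields $\alpha\int_0^T\|y(s)\|_V^2\,ds \le \alpha^{-1}\|f\|_{L^2([0,T],V^*)}^2$, which is the $L^2(V)$ bound; the $W^{1,2}(V^*)$ bound on $y'$ then follows from $A\in\mathscr{L}(V,V^*)$ via $y'=Ay+f$. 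To justify the calculation rigorously I would first take $f\in C^1([0,T],V^*)$, for which $y$ is a classical solution smooth enough to apply the chain rule in $H$, and then extend by density using the a priori bound just derived. Part (ii) is handled analogously: for $\phi_0\in V$ the same energy identity applied to $x(t)=e^{tA}\phi_0$ gives $\alpha\int_0^T\|x(s)\|_V^2\,ds \le \tfrac{1}{2}\|\phi_0\|_H^2$, and the case $\phi_0\in H$ follows by density, since $V\hookrightarrow H$ densely. Superposition of (i) and (ii) gives existence in (iii); uniqueness comes from the same energy identity applied to the difference of two solutions, which forces the $H$-norm to be nonincreasing starting from zero.

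For (iv), I would differentiate the variation-of-constants formula: $z(t):=x'(t)$ satisfies $z'=Az+f'$ with $z(0)=A\phi_0+f(0)\in H$, so applying (iii) with datum $(f',\,A\phi_0+f(0))$ delivers the claimed regularity and integral representation for $z=x'$. For (v), $w(t):=Ax(t)$ formally solves $w'=Aw+Af$ with $w(0)=A\phi_0\in H$; since $f\in L^2([0,T],V)$ implies $Af\in L^2([0,T],V^*)$, applying (iii) one more time closes the argument, after checking that $Ae^{tA}=e^{tA}A$ on suitable domains so as to identify the candidate $e^{tA}A\phi_0+\int_0^t e^{(t-s)A}Af(s)\,ds$ with $Ax(t)$. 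The main technical obstacle is justifying the energy identity at the borderline regularity $y\in L^2(V)\cap W^{1,2}(V^*)$ without assuming extra smoothness: this is a delicate Lions--Magenes-type argument requiring time-mollification together with the interpolation description $H=(V,V^*)_{1/2,2}$; once that identity is secured, the remainder of the proof is essentially bookkeeping.
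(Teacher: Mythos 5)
Your outline is mathematically sound, but note that the paper itself does not prove this theorem at all: it simply refers to Theorems 2.3 and 2.4 of Di Blasio, Kunisch and Sinestrari \cite{dbgkkes84}, so yours is necessarily a different route. What you propose is the classical variational (Lions) energy method: test $y'=Ay+f$ against $y$ in the $\langle\cdot,\cdot\rangle_{V^*,V}$ pairing, use the strict coercivity $a(y,y)\le-\alpha\|y\|_V^2$ from (\ref{29/01/2013(390)}) and Young's inequality to get the $L^2(V)$ bound, recover the $W^{1,2}(V^*)$ bound from the equation since $A\in{\mathscr L}(V,V^*)$, and then obtain (iv)--(v) by applying (iii) to the differentiated, respectively $A$-transformed, data. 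This is correct and self-contained, and it buys an elementary proof whose constants are explicit in $\alpha$; by contrast, the cited reference works directly with the analytic semigroup and the interpolation characterization $H=(V,V^*)_{1/2,2}=\{x:\int_0^\infty\|Ae^{tA}x\|_{V^*}^2dt<\infty\}$, which is what produces the particular constants quoted in the statement (e.g.\ $C_2=\max\{M,\sqrt T,1\}$ and the ordering $\|e^{\cdot A}\phi_0\|_{L^2(V)}\le\|e^{\cdot A}\phi_0\|_{W^{1,2}(V^*)}$), so your argument proves the regularity assertions but not those exact numerical forms. The two genuine pressure points in your plan are exactly the ones you flag: (a) the Lions--Magenes identity $\tfrac12\tfrac{d}{dt}\|y\|_H^2=\langle y',y\rangle_{V^*,V}$ at the borderline regularity, which you correctly circumvent by first taking smooth $f$ (or $\phi_0\in V$) and passing to the limit via the linear a priori bound applied to differences; and (b) in (iv), it is cleaner to \emph{define} $z$ as the solution with data $(f',A\phi_0+f(0))$ and verify $x(t)=\phi_0+\int_0^tz(s)\,ds$ by uniqueness (or integrate by parts in the convolution $\int_0^te^{uA}f(t-u)\,du$) rather than to differentiate $x$ directly, so as not to presuppose the regularity being established. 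With those standard precautions your proof closes.
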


Let $r>0$ and $T\ge  0$. For $x\in L^2([-r, T], V)$, we always write  $x_t(\theta) := x(t+\theta)$ for any $t\ge 0$ and $\theta\in [-r, 0]$ in this work. Now  suppose that  $D_1\in {\mathscr L}(V)$, $D_2\in {\mathscr L}(L^2([-r, 0], V), V)$, $F_1\in {\mathscr L}(V, V^*)$ and $F_2\in {\mathscr L}(L^2([-r, 0], V), V^*).$  We introduce two linear mappings $D$ and ${F}$ on $C([-r, T], V)$, respectively, by
$$
\label{23/05/06(46789)}
Dx_t =D_1x(t-r) + D_2x_t,\hskip 20pt t\in [0, T],\hskip 20pt \forall\, x(\cdot)\in C([-r, T], V),
$$
and
$$
\label{23/05/06(4678)}
Fx_t =F_1x(t-r) + F_2x_t,\hskip 20pt t\in [0, T],\hskip 20pt \forall\, x(\cdot)\in C([-r, T], V).
$$

\begin{lemma}
\label{01/06/16(1)}
Both the mappings $D$ and $F$ have a bounded, linear extension to $L^2([-r, T], V)$ such that for any
 $x\in L^2([-r, T], V)$,
\begin{equation}
\label{26/04/16(1)}
\int^T_0 \|Dx_t\|^2_{V}dt \le C_1\int^T_{-r} \|x(t)\|^2_{V}dt
\end{equation}
 and
\begin{equation}
\label{26/04/16(2)}
\int^T_0 \|Fx_t\|^2_{V^*}dt \le C_2\int^T_{-r} \|x(t)\|^2_{V}dt
\end{equation}
with
\[
\begin{split}
C_1 &=  (\|D_1\|_{{\mathscr L}(V)} + \|D_2\|_{{\mathscr L}(L^2([-r, 0], V), V)}\cdot r^{1/2})^2>0,\\
C_2 &= ( \|F_1\|_{{\mathscr L}(V, V^*)} + \|F_2\|_{{\mathscr L}(L^2([-r, 0], V), V^*)}\cdot r^{1/2})^2>0.
\end{split}
\]
\end{lemma}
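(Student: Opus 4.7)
The plan is to handle the two summands in $Dx_t$ and $Fx_t$ separately, apply Minkowski's inequality in $L^2([0,T],V)$ (respectively $L^2([0,T],V^*)$) to combine them, and then pass from $C([-r,T],V)$ to $L^2([-r,T],V)$ by density. Since the estimates for $D$ and $F$ are structurally identical — only the target norm changes — I would write out $D$ in full detail and indicate that $F$ proceeds by the same argument.

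For the pointwise term, a change of variable $s=t-r$ gives
\[
\int_0^T \|D_1 x(t-r)\|_V^2\,dt \le \|D_1\|_{\mathscr{L}(V)}^2\int_{-r}^{T-r}\|x(s)\|_V^2\,ds \le \|D_1\|_{\mathscr{L}(V)}^2\int_{-r}^{T}\|x(s)\|_V^2\,ds.
\]
For the history term, boundedness of $D_2$ yields $\|D_2 x_t\|_V \le \|D_2\|_{\mathscr{L}(L^2([-r,0],V),V)}\,\|x_t\|_{L^2([-r,0],V)}$. Squaring, integrating in $t$, and swapping the order of integration via Fubini gives
\[
\int_0^T \|D_2 x_t\|_V^2\,dt \le \|D_2\|^2 \int_{-r}^0\!\!\int_0^T \|x(t+\theta)\|_V^2\,dt\,d\theta \le \|D_2\|^2\cdot r\cdot \int_{-r}^T \|x(s)\|_V^2\,ds.
\]
Taking square roots, applying Minkowski's inequality in $L^2([0,T],V)$ to $Dx_t = D_1x(t-r)+D_2x_t$, and re-squaring produces the factor $(\|D_1\|_{\mathscr{L}(V)} + \|D_2\|_{\mathscr{L}(L^2([-r,0],V),V)}\,r^{1/2})^2 = C_1$, which is precisely (\ref{26/04/16(1)}).

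The estimate (\ref{26/04/16(2)}) for $F$ follows by the identical computation, simply replacing the $V$-norm on the output by the $V^*$-norm and using the operator norms of $F_1$ and $F_2$ into $V^*$ instead. To conclude, note that the two inequalities show $D$ and $F$, initially defined on $C([-r,T],V)$, are bounded with respect to the $L^2([-r,T],V)$-norm into $L^2([0,T],V)$ and $L^2([0,T],V^*)$ respectively; since $C([-r,T],V)$ is dense in $L^2([-r,T],V)$, the bounded linear transformation theorem furnishes the claimed unique extensions, and the inequalities persist on the extensions.

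There is no serious obstacle here: the proof is essentially Fubini plus Minkowski. The only place to be mildly careful is bookkeeping the Minkowski step so that the resulting constant matches $C_1$ and $C_2$ exactly, rather than a looser bound like $2(\|D_1\|^2 + r\|D_2\|^2)$; writing the sum of two $L^2$-norms and squaring afterwards (rather than applying $(a+b)^2 \le 2a^2+2b^2$) is what delivers the sharp constants stated in the lemma.
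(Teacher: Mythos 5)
Your proposal is correct and follows essentially the same route as the paper: split $Dx_t$ into the point-delay and distributed terms, bound the first by a change of variable and the second by Fubini, combine via the triangle (Minkowski) inequality in $L^2$ to obtain the sharp constant $(\|D_1\|+\|D_2\|r^{1/2})^2$, and extend by density of $C([-r,T],V)$ in $L^2([-r,T],V)$. The paper likewise proves only the $D$ estimate in detail and notes that $F$ is analogous, so nothing is missing.
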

\begin{proof} We only prove (\ref{26/04/16(1)}) since the relation (\ref{26/04/16(2)}) can be obtained in an analogous manner.
By using H\"older's inequality and Fubini's theorem, we can obtain that for any  $x(\cdot)\in C([-r, T], V)$,
\[
\label{17/05/06(56758)}
\begin{split}
\Big(\int^{T}_0 \|Dx_t\|^2_{V} dt\Big)^{1/2}
&\le \Big(\int^{T}_0 \|D_1\|^2\|x(t-r)\|^2_{V} dt\Big)^{1/2} + \|D_2\| \Big(\int^{T}_0 \int^0_{-r} \|x(t+\theta)\|_{V}^2 d\theta dt\Big)^{1/2}\\
&\le  \|D_1\|\Big(\int^{T}_0 \|x(t-r)\|^2_{V} dt\Big)^{1/2} + \|D_2\|\Big(\int^0_{-r} \int^T_{-r} \|x(t)\|^2_{V} dt d\theta\Big)^{1/2}\\
&\le [ \|D_1\| + \|D_2\| r^{1/2}] \Big(\int^{T}_{-r} \|x(t)\|_{V}^2 dt\Big)^{1/2}.
\end{split}
\]
Since $C([-r, T], {V})$ is dense in $L^2([-r, T], {V})$, $D$ admits a bounded linear extension, still denote it by $D$, from $L^2([-r, T], {V})$ to $L^2([0, T], {V})$. The proof is thus complete.
\end{proof}

Let ${\cal H} = H\times L^2([-r, 0], V)$ and consider the following deterministic functional differential equation of neutral type in $V^*$,
\begin{equation}
\label{25/04/16(1)}
\begin{cases}
\displaystyle\frac{d}{dt}(x(t)-Dx_t) = A(x(t)-Dx_t) +Fx_t,\hskip 15pt t\ge 0,\\
x(0)=\phi_0 + D\phi_1,\,\,x_0=\phi_1,\,\,\phi=(\phi_0, \phi_1)\in {\cal H}.
\end{cases}
\end{equation}
The integral form of (\ref{25/04/16(1)}) is given  by
\begin{equation}
\begin{cases}
\label{25/04/16(2)}
x(t)-Dx_t = \displaystyle e^{tA}\phi_0 + \int^t_0 e^{(t-s)A}Fx_sds,\hskip 15pt t\ge 0,\\
x(0)=\phi_0 + D\phi_1,\,\,x_0=\phi_1,\,\,\phi=(\phi_0, \phi_1)\in {\cal H}.
\end{cases}
\end{equation}
We say that $x$ is a {\it (strict) solution\/} of  (\ref{25/04/16(1)}) in $[0, T]$ if $x\in L^2([0, T], V)\cap W^{1,2}([0, T], V^*)$ and the equation (\ref{25/04/16(1)}) is satisfied almost everywhere in $[0, T]$, $T\ge 0$.

\begin{theorem}
\label{29/04/16(1)}
Given arbitrarily $\phi=(\phi_0, \phi_1) \in H\times L^2([-r, 0], V)$ and $T\ge 0$, there exists a function $x(t)\in V$, $t\in [0, T]$, which is the unique solution  of equation (\ref{25/04/16(1)})   with initial $x(0)=\phi_0 + D\phi_1$ and $x_0=\phi_1$ such that
\[
 x(\cdot)\in L^2([0, T], V),\]
and
\[
 y(\cdot) := x(\cdot) - Dx_\cdot \in L^2([0, T], V)\cap W^{1, 2}([0, T], V^*)\subset C([0, T], H).\]
 Moreover, we have the relations
\begin{equation}
\label{27/04/16(1)}
 \|x\|_{L^2([0, T], V)} \le M\Big(\|\phi_0\|_H + \|\phi_1\|_{L^2([-r, 0], V)}\Big)
\end{equation}
and
\begin{equation}
\label{27/04/16(17890)}
 \|y\|_{L^2([0, T], V)} + \|y\|_{W^{1, 2}([0, T], V^*)}\le M\Big(\|\phi_0\|_H + \|\phi_1\|_{L^2([-r, 0], V)}\Big)
\end{equation}
for some positive number $M=M(T)>0$.
\end{theorem}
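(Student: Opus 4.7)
The plan is to cast the integral form (\ref{25/04/16(2)}) as a fixed-point equation and apply the Banach contraction principle on a short interval $[0,T_0]$, then extend to $[0,T]$ by the method of steps. Concretely, I would fix the initial segment by working in
\[
\mathcal{X}_{T_0} := \{x \in L^2([-r,T_0],V):\, x(t)=\phi_1(t)\ \text{a.e. on}\ [-r,0]\}
\]
and define $\Phi:\mathcal{X}_{T_0}\to L^2([-r,T_0],V)$ by $\Phi(x)(t)=\phi_1(t)$ on $[-r,0]$ and
\[
\Phi(x)(t) = Dx_t + e^{tA}\phi_0 + \int_0^t e^{(t-s)A}Fx_s\,ds,\qquad t\in[0,T_0].
\]
That $\Phi$ is well-defined and maps $\mathcal{X}_{T_0}$ into itself follows by combining Lemma \ref{01/06/16(1)} (so $Dx_\cdot\in L^2([0,T_0],V)$ with $Fx_\cdot\in L^2([0,T_0],V^*)$) with Theorem \ref{23/05/16(10)}(i) applied to $f=Fx_\cdot$ (so the convolution lies in $L^2([0,T_0],V)\cap W^{1,2}([0,T_0],V^*)$).

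The key observation for the contraction step is that for $x,\tilde x\in\mathcal{X}_{T_0}$ with $T_0\le r$, the discrete-delay contributions $D_1(x-\tilde x)(t-r)$ and $F_1(x-\tilde x)(t-r)$ vanish, because $t-r\in[-r,0]$ and $x=\tilde x=\phi_1$ there. Repeating the estimate of Lemma \ref{01/06/16(1)} but restricting the $\theta$-integration to $[-t,0]$ (on which $t+\theta\ge 0$) yields
\[
\|D(x-\tilde x)_\cdot\|_{L^2([0,T_0],V)} \le \|D_2\|\,T_0^{1/2}\,\|x-\tilde x\|_{L^2([0,T_0],V)}
\]
and analogously $\|F(x-\tilde x)_\cdot\|_{L^2([0,T_0],V^*)}\le \|F_2\|\,T_0^{1/2}\,\|x-\tilde x\|_{L^2([0,T_0],V)}$. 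Combined with Theorem \ref{23/05/16(10)}(i), this produces a Lipschitz constant of the form $(\|D_2\|+C_1(T_0)\|F_2\|)\,T_0^{1/2}$, which is strictly less than $1$ for $T_0$ sufficiently small. The Banach contraction principle then gives a unique fixed point $x\in\mathcal{X}_{T_0}$. Setting $y:=x-Dx_\cdot$ and invoking Theorem \ref{23/05/16(10)}(iii) with $f=Fx_\cdot\in L^2([0,T_0],V^*)$ and data $\phi_0\in H$ yields $y\in L^2([0,T_0],V)\cap W^{1,2}([0,T_0],V^*)\subset C([0,T_0],H)$, and the quantitative bounds there together with Lemma \ref{01/06/16(1)} give the two estimates (\ref{27/04/16(1)})--(\ref{27/04/16(17890)}) on $[0,T_0]$.

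To extend from $[0,T_0]$ to $[0,T]$ I would use the method of steps: since $y\in C([0,T_0],H)$, the value $y(T_0)\in H$ together with $x_{T_0}\in L^2([-r,0],V)$ forms an admissible initial pair in $\mathcal{H}$ with $x(T_0)=y(T_0)+Dx_{T_0}$, so the same fixed-point construction applies on $[T_0,2T_0]$ and, by continuity of $y$ at $T_0$, the pieces join to give a solution on $[0,2T_0]$. After finitely many such steps we cover $[0,T]$. Uniqueness on $[0,T]$ follows from uniqueness on each subinterval, and the global estimates (\ref{27/04/16(1)})--(\ref{27/04/16(17890)}) follow by chaining the bounds across the finitely many steps, at the expense of a constant $M=M(T)>0$ that may grow exponentially in $T$.

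The main obstacle is that, unlike the non-neutral setting, the naive Lipschitz constant of $D$ coming from Lemma \ref{01/06/16(1)}, namely $\|D_1\|+\|D_2\|r^{1/2}$, is not assumed to be small, so one cannot run a single contraction argument on $[0,T]$. The resolution is exactly the short-interval observation above: on $[0,T_0]$ with $T_0\le r$, the discrete-delay part $D_1$ (and $F_1$) acts only on the fixed initial datum and therefore drops out of differences, which lets us make the Lipschitz constant arbitrarily small by shrinking $T_0$. Verifying that the regularity $y\in C([0,T_0],H)$ supplied by Theorem \ref{23/05/16(10)}(iii) furnishes a legitimate new initial pair for the next step is the technical glue that makes the patching go through.
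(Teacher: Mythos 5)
Your proposal is correct and follows essentially the same route as the paper: a contraction argument for the integral equation on a short interval $[0,t_0]$ with $t_0<r$, exploiting that the point-delay terms $D_1$ and $F_1$ act only on the fixed initial segment and hence cancel in differences, followed by the regularity and bounds from Theorem \ref{23/05/16(10)} and a step-by-step extension to $[0,T]$. The only cosmetic difference is the precise power of $t_0$ in the Lipschitz constant for the convolution term, which is immaterial since both tend to zero.
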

\begin{proof}
We shall use a fixed point argument to the integral equation (\ref{25/04/16(2)}). Let $t_0\in (0, r)$ and $\Sigma$ be a closed subspace of $L^2([-r, t_0], V)$ such that  $x(0)=\phi_0 + D\phi_1$ and $x_0=\phi_1$ for $x\in \Sigma$. Define a mapping $S$ on $\Sigma$ as follows:  for $x\in \Sigma$,
\begin{equation}
\label{23/05/16(60)}
\begin{cases}
(Sx)(t) = Dx_t + e^{tA}\phi_0+ \displaystyle\int^t_0 e^{(t-s)A} Fx_sds\hskip 10pt \hbox{for}\hskip 10pt t\in [0, t_0],\\
(Sx)(0)=\phi_0 + D\phi_1,\,\,\, (Sx)_0=\phi_1,\,\,\,\,(\phi_0, \phi_1)\in {\cal H}.
\end{cases}
\end{equation}
Note that we have
\begin{equation}
\label{02/06/16(11)}
\begin{split}
\|D_2x_\cdot\|_{L^2([0, t_0], V)}&\le \|D_2\|\Big(\int^{t_0}_0 \|x_t\|^2_{L^2([-r, 0], V)}dt\Big)^{1/2}\\
&= \|D_2\|\Big(\int^{t_0}_0 \int^0_{-r} \|x(t+\theta)\|^2_Vd\theta dt\Big)^{1/2}\\
&\le \|D_2\|\Big( \int^{t_0}_0 \int^{t_0}_{-r} \|x(s)\|^2_V ds dt\Big)^{1/2}=\sqrt{t_0}\|D_2\|\|x\|_{L^2([-r, t_0], V)}.
\end{split}
\end{equation}
Thus, by virtue of Theorem \ref{23/05/16(10)} (i) and (ii), it is immediate that   $Sx(\cdot)\in L^2([0, t_0], V)$ and $Sx(\cdot) - Dx_\cdot\in L^2([0, t_0], V)\cap W^{1, 2}([0, t_0], V^*)$ for each $x\in \Sigma$.

To obtain a unique solution of  (\ref{25/04/16(1)}), it suffices to show that $S$ is a contraction from $\Sigma$ into itself for sufficiently small $t_0>0$ and then implement a successive interval argument to extend the solution onto the whole interval $[0, T]$. Indeed, for any $x,\,\bar x\in \Sigma$ and $t\in [0, t_0]$, we have
\begin{equation}
\label{02/06/16(10)}
\begin{split}
(Sx -S\bar x)(t) = &\,\, D_1(x(t-r) -\bar x(t-r)) + D_2(x_t -\bar x_t)\\
&\,\, + \int^t_0 e^{(t-s)A} [F_1(x(s-r)-\bar x(s-r)) + F_2(x_s-\bar x_s)]ds\\
=&\,\, D_2(x_t -{\bar x}_t) + \int^t_0 e^{(t-s)A}F_2(x_s-{\bar x}_s)ds.
\end{split}
\end{equation}
which, in addition to (\ref{02/06/16(11)}) and (\ref{02/06/16(10)}),  immediately yields that
\begin{equation}
\label{23/05/16(15)}
\begin{split}
\|Sx-S\bar x&\|_{L^2([0, t_0], V)}\\
\le&\,\, \|D_2(x_\cdot-\bar x_\cdot)\|_{L^2([0, t_0], V)}+ \Big\|\int^\cdot_0 e^{(\cdot-s)A} F_2(x_s-\bar x_s)ds\Big\|_{L^2([0, t_0], V)}\\
\le&\,\, \|D_2\|\sqrt{t_0}\|x(\cdot)-\bar x(\cdot)\|_{L^2([-r, t_0], V)} + M_0 t_0 \|F_2\|\|x(\cdot)-\bar x(\cdot)\|_{L^2([-r, t_0], V)}\\
=&\,\, \delta(t_0)\|x(\cdot)-\bar x(\cdot)\|_{L^2([-r, t_0], V)},
\end{split}
\end{equation}
where $\delta(t_0)=  \|D_2\|\sqrt{t_0} +  M_0\|F_2\| t_0\to 0$ as $t_0\to 0$. The map $S$ is thus a contraction in $\Sigma$ and the equation (\ref{25/04/16(2)}) has a unique solution $x$ on $[-r, t_0]$.

To show the relation (\ref{27/04/16(1)}), we re-write (\ref{25/04/16(2)}) for $t\in [0, t_0]$, $t_0<r$, to get
\[
x(t) = Dx_t + e^{tA}\phi_0 + \int^t_0 e^{(t-s)A}Fx_sds.\]
Then, from Theorem \ref{23/05/16(10)} (i) and (ii) and (\ref{02/06/16(11)}), we obtain
\begin{equation}
\label{23/05/16(11)}
\begin{split}
\|x\|_{L^2([0, t_0], V)}\le&\,\, \sqrt{t_0}\|D_2\|\|x\|_{L^2([0, t_0], V)} +  C_2(t_0)\|\phi_0\|_H\\
&\,\, + C_1(t_0)\Big(\int^{t_0}_0 \|F_1\phi_1(t-r)\|^2_{V^*}dt\Big)^{1/2} + C_1(t_0)\Big(\int^{t_0}_0 \|F_2x_t\|^2_{V^*}dt\Big)^{1/2},
\end{split}
\end{equation}
where $C_1(t_0),\,C_2(t_0)>0$ are those numbers given in Theorem  \ref{23/05/16(10)}.
On the other hand, it is immediate that
\begin{equation}
\label{23/05/16(12)}
\Big(\int^{t_0}_0 \|F_1\phi_1(t-r)\|^2_{V^*}dt\Big)^{1/2} \le \|F_1\|\|\phi_1\|_{L^2([-r, 0], V)},
\end{equation}
and similarly to  (\ref{23/05/16(15)}), we have
\begin{equation}
\label{23/05/16(20)}
\Big(\int^{t_0}_0 \|F_2x_t\|^2_{V^*}dt\Big)^{1/2} \le \|F_2\|t_0 \Big(\int^{t_0}_0 \|x(t)\|^2_Vdt\Big)^{1/2}.
\end{equation}
Hence, by letting $t_0$ be sufficiently small, we have from  (\ref{23/05/16(11)}), (\ref{23/05/16(12)}) and (\ref{23/05/16(20)}) that
\begin{equation}
\label{23/05/16(50)}
\|x\|_{L^2([0, t_0], V)}\le M(t_0) (\|\phi_0\|_H + \|\phi_1\|_{L^2([-r, 0], V)}),
\end{equation}
where \[
M(t_0) :=(1- \sqrt{t_0}\|D_2\| - C_1(t_0)\|F_2\|t_0 )^{-1}(C_1(t_0)\|F_1\|+C_2(t_0))>0.\]
 In a similar manner, we can show the relation (\ref{27/04/16(17890)}).

 Last, by repeating the above argument on $[t_0, 2t_0]$, $[2t_0, 3t_0]$, $\ldots$, we can finally show the existence and uniqueness of a  solution $x$ of  (\ref{25/04/16(1)}) on $[0, T]$   satisfying the estimate (\ref{27/04/16(1)}) or (\ref{27/04/16(17890)}) for each $T\ge 0$.
The proof is thus complete.
\end{proof}

The following results give conditions on the initial data in order to obtain a solution which is more regular with respect to time or space variables.

\begin{theorem}
\label{29/04/16(3)}
Suppose that $(\phi_0, \phi_1)\in V\times W^{1, 2}([-r, 0], V)$ with
\[
\phi_0=\phi_1(0)-D\phi_1\in V\hskip 15pt \hbox{ and}\hskip 15pt A\phi_0 + F\phi_1\in H,\]
then the solution $x$ of (\ref{25/04/16(2)}) satisfies
\begin{equation}
\label{29/04/16(002)}
 x(\cdot) \in W^{1, 2}([-r, T], V),
\end{equation}
and
\begin{equation}
\label{29/04/16(2)}
 x(\cdot) -Dx_{\cdot} \in W^{1, 2}([-r, T], V)\cap W^{2, 2}([0, T], V^*)\subset C^1([0, T], H),
\end{equation}
for each $T\ge 0$.
\end{theorem}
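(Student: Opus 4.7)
The plan is a bootstrap: first I would upgrade the solution $x$ itself to $W^{1,2}([-r,T],V)$ by a contraction argument applied to a candidate equation for $\dot x$, and then deduce the extra regularity of $y:=x-Dx_{\cdot}$ from Theorem \ref{23/05/16(10)}(iv). Formally differentiating (\ref{25/04/16(1)}) in time, and using that $D$ and $F$ commute with $d/dt$ on differentiable histories (so $(Dx_t)'=D\dot x_t$ and $(Fx_t)'=F\dot x_t$), one is led to expect that $z:=\dot x$ satisfies
\begin{equation*}
z(t)-Dz_t \;=\; e^{tA}\bigl(A\phi_0+F\phi_1\bigr)+\int_{0}^{t} e^{(t-s)A}Fz_s\,ds,\qquad t\in[0,T],
\end{equation*}
subject to $z_0=\dot\phi_1\in L^{2}([-r,0],V)$, the $V$-valued weak derivative supplied by $\phi_1\in W^{1,2}([-r,0],V)$. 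The extra hypothesis $A\phi_0+F\phi_1\in H$ is exactly what permits Theorem \ref{23/05/16(10)}(ii) to guarantee that the free term $t\mapsto e^{tA}(A\phi_0+F\phi_1)$ belongs to $L^{2}([0,T],V)$, so the contraction-mapping construction used in the proof of Theorem \ref{29/04/16(1)} (run first on a small $[0,t_0]$, then patched to $[0,T]$) produces a unique $z\in L^{2}([-r,T],V)$ with $z-Dz_{\cdot}\in L^{2}([0,T],V)\cap W^{1,2}([0,T],V^{*})$ solving this integral equation.

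Next I would identify $\dot x$ with $z$. Setting $\tilde x(t):=\phi_1(0)+\int_{0}^{t}z(s)\,ds$ on $[0,T]$ and $\tilde x\equiv\phi_1$ on $[-r,0]$, integration of the $z$-equation in $t$ combined with Fubini, with the analytic-semigroup identity $A\int_0^t e^{sA}h\,ds=(e^{tA}-I)h$ (valid in $V^{*}$ since $-A$ is coercive from $V$ to $V^*$), and with the compatibility $\phi_0=\phi_1(0)-D\phi_1$, shows that $\tilde x$ satisfies (\ref{25/04/16(2)}) with initial data $(\phi_0,\phi_1)$. Uniqueness in Theorem \ref{29/04/16(1)} then forces $x=\tilde x$, hence $\dot x=z$ on $[0,T]$ and $\dot x=\dot\phi_1$ on $[-r,0]$, giving (\ref{29/04/16(002)}). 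With $x\in W^{1,2}([-r,T],V)$ in hand, Lemma \ref{01/06/16(1)} applied to $\dot x$ (via the linear-boundedness of $F$) yields $Fx_{\cdot}\in W^{1,2}([0,T],V^{*})$; viewing $y=x-Dx_{\cdot}$ as the strict solution of the non-delay Cauchy problem $\dot y=Ay+Fx_{\cdot}$ with $y(0)=\phi_0\in V$ and $Ay(0)+Fx_0=A\phi_0+F\phi_1\in H$, Theorem \ref{23/05/16(10)}(iv) then delivers $y\in W^{1,2}([0,T],V)\cap W^{2,2}([0,T],V^{*})\subset C^{1}([0,T],H)$, which is (\ref{29/04/16(2)}) (the $W^{1,2}$-regularity on $[-r,0]$ being inherited from $\phi_1\in W^{1,2}([-r,0],V)$).

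The hardest part will be the clean identification $\dot x=z$ together with the bookkeeping of values at $t=0$: the compatibility $\phi_0=\phi_1(0)-D\phi_1$ is exactly what makes $\tilde x$ continuous across $0$ and matches $x(0)=\phi_0+D\phi_1=\phi_1(0)$, while $A\phi_0+F\phi_1\in H$ licences Theorem \ref{23/05/16(10)}(ii) on the new free term of the $z$-equation; dropping either hypothesis would break a corresponding step of the bootstrap. Once this matching is in place, the rest reduces to routine contraction estimates and regularity bookkeeping already developed in Theorems \ref{23/05/16(10)} and \ref{29/04/16(1)}.
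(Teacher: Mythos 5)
Your proof is correct, but it takes a genuinely different route from the paper's. The paper simply reruns the fixed-point argument of Theorem \ref{29/04/16(1)} with the ambient space $L^2([-r,t_0],V)$ replaced by the closed subspace $\Sigma_0\subset W^{1,2}([-r,t_0],V)$ determined by the initial data: Theorem \ref{23/05/16(10)}(iv)--(v) (which is where $\phi_0\in V$ and $A\phi_0+F\phi_1\in H$ enter) shows that the map $S$ of (\ref{23/05/16(60)}) sends $\Sigma_0$ into itself, the same $\delta(t_0)$-estimate makes it a contraction in the $W^{1,2}$-norm, and patching over $[t_0,2t_0],\dots$ gives the regularity on $[0,T]$. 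You instead differentiate the equation: your $z$-equation is exactly equation (\ref{25/04/16(2)}) with initial datum $(A\phi_0+F\phi_1,\phi_1')\in{\cal H}$, so Theorem \ref{29/04/16(1)} can be invoked as a black box rather than re-proved in a stronger norm; the price is the integration/Fubini bookkeeping needed to check that $\tilde x(t)=\phi_1(0)+\int_0^t z(s)\,ds$ solves (\ref{25/04/16(2)}). That verification does go through, using $\int_0^t e^{sA}A\phi_0\,ds=e^{tA}\phi_0-\phi_0$ in $V^*$, the decomposition $F\tilde x_s=F\phi_1+\int_0^s Fz_u\,du$, the interchange $\int_0^t\bigl(\int_0^u e^{(u-s)A}Fz_s\,ds\bigr)du=\int_0^t\bigl(\int_0^{t-s}e^{vA}\,dv\bigr)Fz_s\,ds$, and the compatibility $\phi_0=\phi_1(0)-D\phi_1$ to absorb the boundary terms --- exactly the points you flag as the delicate ones. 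Your final step, feeding $f=Fx_\cdot\in W^{1,2}([0,T],V^*)$ and $A\phi_0+F\phi_1\in H$ into Theorem \ref{23/05/16(10)}(iv) to obtain (\ref{29/04/16(2)}), is the same mechanism the paper uses inside its contraction. Both arguments invoke the two standing hypotheses at the same junctures; yours has the advantage of reusing Theorem \ref{29/04/16(1)} verbatim and of making the role of each hypothesis explicit, while the paper's is shorter because it never has to identify the derivative with the solution of an auxiliary equation.
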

\begin{proof}
In correspondence with the time $t_0\in [0, r]$ in the proof of Theorem \ref{29/04/16(1)}, let us consider the following closed subspace $\Sigma_0$ of $W^{1, 2}([-r, t_0], V)$ such that $x(0) = \phi_0 + D\phi_1$ and $x_0=\phi_1$ for $x\in \Sigma_0$. Define the same mapping $S$ as in (\ref{23/05/16(60)}) on $\Sigma_0$ by
\[
Sx(t) = Dx_t + e^{tA}\phi_0+ \int^t_0 e^{(t-s)A} Fx_sds\hskip 10pt \hbox{for any}\hskip 10pt t\in [0, t_0].\]
Once again, it is immediate from Theorem  \ref{23/05/16(10)} (iv) and (v) that $Sx(\cdot)\in W^{1, 2}([0, t_0], V)$ and $Sx(\cdot) - Dx_\cdot \in W^{1, 2}([0, t_0], V)\cap W^{2, 2}([0, t_0], V^*)$ for each $x\in \Sigma_0$.

We shall show that $S$ is a contraction from $\Sigma_0$ into itself for sufficiently small $t_0>0$. Indeed,  first we note that $Fx_t\in W^{1, 2}([0, T], V^*)$ for  $x\in \Sigma_0$ and $A\phi_0+ F\phi_1\in H$. Then for any $x,\,\bar x\in \Sigma_0$ and $t\in [0, t_0]$, we have
\[
\begin{split}
\|Sx-S\bar x&\|_{W^{1, 2}([0, t_0], V)}\\
\le&\,\, \|D_2(x_\cdot-\bar x_\cdot)\|_{W^{1, 2}([0, t_0], V)}+ \Big\|\int^\cdot_0 e^{(\cdot-s)A} F_2(x_s-\bar x_s)ds\Big\|_{W^{1, 2}([0, t_0], V)}\\
\le&\,\, \|D_2\|\sqrt{t_0}\|x(\cdot)-\bar x(\cdot)\|_{W^{1, 2}([-r, t_0], V)} + M_0 t_0 \|F_2\|\|x(\cdot)-\bar x(\cdot)\|_{W^{1, 2}([-r, t_0], V)}\\
=&\,\, \delta(t_0)\|x(\cdot)-\bar x(\cdot)\|_{W^{1, 2}([-r, t_0], V)},
\end{split}
\]
where $\delta(t_0)=  \|D_2\|\sqrt{t_0} +  M_0\|F_2\| t_0\to 0$ as $t_0\to 0$. Hence, map $S$ is a contraction in $\Sigma_0$ and  the equation (\ref{25/04/16(2)}) has a unique solution $x$ such that
\[
x(\cdot) \in W^{1, 2}([-r, t_0], V),\]
and, similarly,
\[
y(\cdot) = x(\cdot) - Dx_\cdot \in W^{1, 2}([-r, t_0], V)\cap W^{2, 2}([0, t_0], V^*)\subset C^1([0, t_0], H).\]
Moreover, we can proceed as in the proof of  Theorem \ref{29/04/16(1)} to get a solution $x$ in $[-r, T]$ and the relation (\ref{29/04/16(002)}) or (\ref{29/04/16(2)}) for all $T\ge 0$.  The proof is easily completed.
\end{proof}

  Let  $x(t)$, $t\ge -r$ (and $y(t)$, $t\ge 0$) denote the unique solution of system (\ref{25/04/16(1)})
 with initial $x(0)=\phi_0 + D\phi_1$ and $x_0=\phi_1$, $\phi=(\phi_0, \phi_1)\in {\cal H}$. We define a family of operators ${\cal S}(t):\, {\cal H}\to {\cal H}$, $t\ge 0$,  by
\begin{equation}
\label{05/06/06(1178)}
{\cal S}(t)\phi =(x(t)-Dx_t, x_t)\hskip 10pt \hbox{for any}\hskip 10pt \phi\in {\cal H}.
\end{equation}

\begin{theorem}
The family $t\to {\cal S}(t)$ is a strongly continuous semigroup on ${\cal H}$, i.e.,
\begin{enumerate}
\item[(i)] ${\cal S}(t)\in {\mathscr L}({\cal H})$ for each $t\ge 0$;
\item[(ii)] ${\cal S}(0)=I$, ${\cal S}(s+t)={\cal S}(s){\cal S}(t)$ for any $s,\,t\ge 0$;
\item[(iii)] $\lim_{t\to0^+}{\cal S}(t)\phi=\phi$ for each $\phi\in {\cal H}.$
\end{enumerate}
\end{theorem}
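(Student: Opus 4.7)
The plan is to verify (i)--(iii) by unpacking the estimates in Theorem 2.2 and using uniqueness of solutions. Linearity of $\mathcal{S}(t)$ is automatic: the map $\phi\mapsto x(\cdot;\phi)$ is linear because $D$, $F$, and $A$ act linearly in the integral equation (\ref{25/04/16(2)}) (and uniqueness rules out nonlinear branches). For boundedness in (i), I would fix $t\ge 0$, put $T=t$, and combine the two estimates (\ref{27/04/16(1)}) and (\ref{27/04/16(17890)}) with the continuous embedding $L^2([0,T],V)\cap W^{1,2}([0,T],V^*)\hookrightarrow C([0,T],H)$: the first gives $\|x_t\|_{L^2([-r,0],V)}^2=\int_{t-r}^{t}\|x(s)\|_V^2\,ds\le \|\phi_1\|_{L^2([-r,0],V)}^2+\|x\|_{L^2([0,T],V)}^2$, while the second gives $\|x(t)-Dx_t\|_H=\|y(t)\|_H\le C\|y\|_{C([0,T],H)}\le CM(T)(\|\phi_0\|_H+\|\phi_1\|_{L^2([-r,0],V)})$. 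Adding yields $\|\mathcal{S}(t)\phi\|_{\mathcal{H}}\le K(t)\|\phi\|_{\mathcal{H}}$.

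For (ii), $\mathcal{S}(0)\phi=(x(0)-Dx_0,x_0)=(\phi_0+D\phi_1-D\phi_1,\phi_1)=\phi$ follows straight from the initial condition in (\ref{25/04/16(1)}). For the cocycle identity, fix $s,t\ge 0$ and let $x$ solve (\ref{25/04/16(1)}) from $\phi$. Set $\psi=\mathcal{S}(t)\phi=(x(t)-Dx_t,x_t)\in\mathcal{H}$ and let $z$ be the solution from $\psi$. Then by construction $z_0=x_t$ and $z(0)=\psi_0+D\psi_1=x(t)-Dx_t+Dx_t=x(t)$, so the shifted function $\widetilde x(\sigma):=x(t+\sigma)$ satisfies (\ref{25/04/16(2)}) with the same initial data as $z$; uniqueness from Theorem 2.2 gives $z(\sigma)=x(t+\sigma)$ for $\sigma\ge -r$, hence $\mathcal{S}(s)\mathcal{S}(t)\phi=(z(s)-Dz_s,z_s)=(x(t+s)-Dx_{t+s},x_{t+s})=\mathcal{S}(s+t)\phi$.

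For (iii), I treat the two components separately. The first component is $y(t)=x(t)-Dx_t$, which by Theorem 2.2 belongs to $C([0,T],H)$, so $y(t)\to y(0)=\phi_0$ in $H$ as $t\to 0^+$. For the second component I must show $\|x_t-\phi_1\|_{L^2([-r,0],V)}\to 0$, i.e.\ $\int_{-r}^{0}\|x(t+\theta)-\phi_1(\theta)\|_V^2\,d\theta\to 0$. Extend $\phi_1$ to $\widetilde\phi_1\in L^2([-r,T],V)$ by $\widetilde\phi_1=\phi_1$ on $[-r,0]$ and $\widetilde\phi_1=x$ on $[0,T]$; since $x\in L^2([-r,T],V)$ (via (\ref{27/04/16(1)}) augmented by $\phi_1$ on $[-r,0]$), $\widetilde\phi_1$ lies in $L^2([-r,T],V)$. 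Continuity of translation in $L^p$ (applied to the $V$-valued function $\widetilde\phi_1$, approximating by continuous functions and using density) gives $\int_{-r}^0\|\widetilde\phi_1(t+\theta)-\widetilde\phi_1(\theta)\|_V^2\,d\theta\to 0$ as $t\to 0^+$, which is exactly the required convergence.

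The step I expect to require the most care is the $L^2$-translation argument in (iii): one must justify $L^p$-continuity of translations for $V$-valued measurable functions, which I would do by first showing the result for $C([-r,T],V)$ functions (trivial by uniform continuity) and then using density of $C([-r,T],V)$ in $L^2([-r,T],V)$ together with a standard $\varepsilon/3$ argument. Everything else — (i) and (ii) — is essentially bookkeeping on top of Theorem 2.2.
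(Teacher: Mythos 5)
Your proposal is correct and follows essentially the same route as the paper: boundedness from the a priori estimates of Theorem 2.2 together with the embedding into $C([0,T],H)$, the semigroup law from the integral equation plus uniqueness of solutions (the paper carries out explicitly the routine computation, via the semigroup property of $e^{tA}$, showing that the time-shifted trajectory satisfies the same integral equation, which you assert ``by construction''), and strong continuity from $y\in C([0,T],H)$ for the first component and $L^2$-continuity of translations of $x\in L^2([-r,T],V)$ for the second. Your write-up merely makes the translation-continuity step more explicit than the paper does; there is no substantive difference.
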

\begin{proof}
Let $y(t, \phi) = x(t, \phi) - Dx_t(\phi)$ for each $t\ge 0$, $\phi\in {\cal H}$ and the solution $x(t, \phi)$ of  (\ref{25/04/16(1)}). Then we have by virtue of Theorem \ref{29/04/16(1)}
 that for any $t\in [0, T]$,
\[
\begin{split}
\|{\cal S}(t)\phi\|_{\cal H}^2 &= \|x(t, \phi)- Dx_t(\phi)\|_H^2 + \int^0_{-r} \|x(t+\theta, \phi)\|^2_Vd\theta\\
&\le C_1\Big(\|x(\cdot)-Dx_\cdot\|^2_{L^2([0, T], V)} + \|x(\cdot)-Dx_\cdot\|^2_{W^{1, 2}([0, T], V^*)} + \int^T_{-r} \|x(t, \phi)\|^2_Vdt\Big)\\
&\le C_2(T)\big(\|\phi_0\|_H^2 + \|\phi_1\|^2_{L^2([-r, 0], V)}\big),
\end{split}
\]
where $C_1,\,C_2(T)>0$, which shows (i).  To show (ii),    it is easy to see from (\ref{25/04/16(2)}) that for any $t\ge s\ge 0$,
\[
\begin{split}
y(t-s, {\cal S}(s)\phi)&= e^{(t-s)A}({\cal S}\phi)_0 +\int^{t-s}_{0} e^{(t-s-u)A}Fx_{u}({\cal S}(s)\phi)du\\
&=  e^{tA}\phi_0 +\int^s_0 e^{(t-u)A}Fx_u(\phi)du+\int^{t}_{s} e^{(t-u)A}Fx_{u-s}({\cal S}(s)\phi)du.\\
\end{split}
\]
On the other hand, for $t\ge s$,
\begin{equation}
\label{02/11/06(478)}
\begin{split}
y(t, \phi) =  e^{tA}\phi_0 + \int^{s}_0 e^{(t-u)A}Fx_u(\phi)du+\int^t_{s} e^{(t-u)A}Fx_{u}(\phi)du,
\end{split}
\end{equation}
which immediately implies that for any $t\ge s\ge 0$,
 \begin{equation}
\label{28/04/16(1)}
\begin{split}
x(t-s, {\cal S}(s)\phi) - D&x_{t-s}({\cal S}(s)\phi) - \int^{t}_{s} e^{(t-u)A}Fx_{u-s}({\cal S}(s)\phi)du\\
& = x(t, \phi) - Dx_t(\phi) -\int^t_{s} e^{(t-u)A}Fx_{u}(\phi)du.
\end{split}
\end{equation}
Thus, by the uniqueness of solutions to (\ref{28/04/16(1)}),
it follows that
\begin{equation}
\label{06/10/07(22256)}
x(t-s, {\cal S}(s)\phi)=x(t, \phi),\hskip 20pt t\ge s,
\end{equation}
and further
\[
y(t-s, {\cal S}(s)\phi)=x(t-s, {\cal S}(s)\phi) -Dx_{t-s}({\cal S}(s)\phi) = x(t, \phi) -Dx_t(\phi)    =y(t, \phi)
\]
for any $t\ge s\ge 0$.
Hence,  $[{\cal S}(t-s){\cal S}(s)\phi]_0=[{\cal S}(t)\phi]_0$ and similarly  we can show that $[{\cal S}(t-s){\cal S}(s)\phi]_1=[{\cal S}(t)\phi]_1$ for any $t\ge s\ge 0$.

Finally, to show (iii) we notice that
\[
\|{\cal S}(t)\phi -\phi\|^2_{\cal H} = \|y(t)  -y(0)\|^2_H + \int^0_{-r}\|x(t+\theta)-\phi_1(\theta)\|^2_Vd\theta \to 0\hskip 10pt \hbox{as}\hskip 10pt t\to 0,\]
since $y\in C([0, T], H)$ and $x\in L^2([-r, T], V)$ for each $T\ge 0$. The proof is complete.
\end{proof}

The following theorem whose finite dimensional version was established in Ito and Tarn \cite{kitt85} gives a complete description of the generator ${\cal A}$ of semigroup $e^{t{\cal A}}$, $t\ge 0$.

\begin{theorem}
\label{06/03/2017(1)}
The generator ${\cal A}$ of the strongly continuous semigroup $e^{t{\cal A}}$, $t\ge 0$, is given by
\[
{\mathscr D}({\cal A}) = \Big\{(\phi_0, \phi_1)\in {\cal H}: \phi_1\in W^{1, 2}([-r, 0], V),\,\phi_0=\phi_1(0)-D\phi_1\in V,\,A\phi_0+F\phi_1\in H\Big\}\]
and for each $\phi=(\phi_0, \phi_1)\in {\mathscr D}({\cal A})$,
\[
{\cal A}\phi = (A\phi_0 + F\phi_1, \phi'_1)\in {\cal H}.\]
\end{theorem}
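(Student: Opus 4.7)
Let $\mathcal{A}_0$ denote the operator with the domain and action specified in the theorem. My plan is to establish the two inclusions $\mathcal{A}_0 \subseteq \mathcal{A}$ and $\mathcal{A} \subseteq \mathcal{A}_0$ separately, the second through a resolvent surjectivity argument.

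For $\mathcal{A}_0 \subseteq \mathcal{A}$, fix $\phi = (\phi_0, \phi_1) \in \mathscr{D}(\mathcal{A}_0)$. The hypotheses on $\phi$ are exactly those of Theorem \ref{29/04/16(3)}, so the corresponding solution of (\ref{25/04/16(1)}) satisfies $x \in W^{1,2}([-r, T], V) \hookrightarrow C([-r,T], V)$ and $y := x - Dx_\cdot \in C^1([0,T], H)$. Evaluating the differential form of (\ref{25/04/16(1)}) at $t = 0$ then yields $\lim_{t\to 0^+} t^{-1}(y(t) - \phi_0) = y'(0) = A\phi_0 + F\phi_1$ in $H$, which handles the first component of $t^{-1}(\mathcal{S}(t)\phi - \phi)$. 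For the second component, I would split the $L^2([-r,0], V)$-norm of $t^{-1}(x_t - \phi_1) - \phi_1'$ into integrals over $[-r, -t]$ and $[-t, 0]$. On $[-r, -t]$, we have $x(t+\theta) = \phi_1(t+\theta)$ and $x(\theta) = \phi_1(\theta)$, so the standard Sobolev shift convergence $t^{-1}\int_\theta^{t+\theta}\phi_1'(s)\,ds \to \phi_1'(\theta)$ in $L^2$ dispatches this piece. On the short interval $[-t, 0]$, I would use $\|x(t+\theta) - x(\theta)\|_V \le \int_\theta^{t+\theta}\|x'(s)\|_V\, ds$ together with Cauchy--Schwarz, bounding the residual by a constant multiple of $\|x'\|_{L^2([-t, t], V)}^2 + \|\phi_1'\|_{L^2([-t,0], V)}^2$, which tends to $0$.

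For $\mathcal{A} \subseteq \mathcal{A}_0$, since $\mathcal{A}$ generates a $C_0$-semigroup, there exists $\lambda_0 > 0$ so that $(\lambda_0, \infty) \subset \rho(\mathcal{A})$. It suffices to show $\lambda - \mathcal{A}_0 : \mathscr{D}(\mathcal{A}_0) \to \mathcal{H}$ is surjective for some such $\lambda$, since the already-proved $\mathcal{A}_0 \subseteq \mathcal{A}$ combined with injectivity of $\lambda - \mathcal{A}$ then forces equality. Given $(f_0, f_1) \in \mathcal{H}$, the equation $\phi_1' = \lambda\phi_1 - f_1$ is solved explicitly as
\[
\phi_1(\theta) = e^{\lambda\theta}c + \int_\theta^0 e^{\lambda(\theta-s)} f_1(s)\, ds, \qquad \theta \in [-r, 0],
\]
parametrized by the unknown $c = \phi_1(0) \in V$. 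Setting $\phi_0 = c - D\phi_1$ (well-defined in $V$ since $\phi_1 \in W^{1,2}([-r,0], V) \hookrightarrow C([-r, 0], V)$) and substituting into the first-component equation reduces the problem to solving
\[
\bigl[(\lambda - A)(I - D_\lambda) - F_\lambda\bigr] c = \tilde f_0 \in V^*,
\]
where $D_\lambda c := D(e^{\lambda\cdot}c)$, $F_\lambda c := F(e^{\lambda\cdot}c)$, and $\tilde f_0$ packages the $f_0$- and $f_1$-dependent terms. The G\r{a}rding inequality (\ref{29/01/2013(390)}) together with Lax--Milgram yields $\lambda - A : V \to V^*$ invertible for $\lambda$ large, while $\|D_\lambda\|_{\mathscr{L}(V)}$ and $\|F_\lambda\|_{\mathscr{L}(V, V^*)}$ both tend to $0$ as $\lambda \to \infty$ because $e^{-\lambda r}$ and $\|e^{\lambda\cdot}\|_{L^2([-r,0])}$ vanish in that limit. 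A Neumann-series perturbation of the identity then supplies a unique $c \in V$, producing $\phi \in \mathscr{D}(\mathcal{A}_0)$ with $\lambda\phi - \mathcal{A}_0\phi = (f_0, f_1)$.

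The delicate step will be the $L^2$-convergence of $t^{-1}(x_t - \phi_1)$ to $\phi_1'$ in the first inclusion, because $x_t(\theta)$ has a ``kink'' at $\theta = -t$ separating the prescribed initial history from the freshly generated solution; the splitting above handles it by exploiting the $V$-continuity of $x$ guaranteed by the $W^{1,2}$ regularity in Theorem \ref{29/04/16(3)}, together with the absolute continuity of $\int\|\phi_1'\|_V^2$. The resolvent argument in the second inclusion is essentially algebraic once the operator-norm bounds on $D_\lambda$ and $F_\lambda$ are in place.
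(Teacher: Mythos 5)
Your proposal is correct, and its second half takes a genuinely different route from the paper. For the inclusion $\mathcal{A}_0\subseteq\mathcal{A}$ you do essentially what the paper does in the third of the four propositions following the theorem statement: invoke Theorem \ref{29/04/16(3)} to get $x\in W^{1,2}([-r,T],V)$ and $x-Dx_\cdot\in C^1([0,T],H)$, read off $y'(0)=A\phi_0+F\phi_1$ from the regularity formula of Theorem \ref{23/05/16(10)}, and prove $t^{-1}(x_t-\phi_1)\to\phi_1'$ in $L^2([-r,0],V)$; the paper handles the latter in one stroke by writing $t^{-1}(x_t(\theta)-\phi_1(\theta))=t^{-1}\int_\theta^{t+\theta}x'(s)\,ds$ and using $L^2$-continuity of the Steklov average of $x'\in L^2([-r,T],V)$, so your splitting at the ``kink'' $\theta=-t$ is sound but unnecessary --- there is no kink at the level of $W^{1,2}$ regularity across $\theta=0$. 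Where you diverge is the reverse inclusion: the paper never argues via resolvent surjectivity; instead it proves that the candidate domain is dense, invariant under $\mathcal{S}(t)$, and that $\mathcal{A}_0$ is closed, and then appeals to the core criterion of Davies (Theorem 1.9 of \cite{ebd80}) to conclude $\mathcal{A}_0=\mathcal{A}$. Your argument --- solve $\phi_1'=\lambda\phi_1-f_1$ explicitly, reduce to $\Delta(\lambda)c=\tilde f_0$ with $\Delta(\lambda)=(\lambda-A)(I-D(e^{\lambda\cdot}))-F(e^{\lambda\cdot})$, and invert by Lax--Milgram plus a Neumann series since $\|D(e^{\lambda\cdot})\|_{\mathscr{L}(V)}$, $\|F(e^{\lambda\cdot})\|_{\mathscr{L}(V,V^*)}$ and $\|(\lambda-A)^{-1}\|_{\mathscr{L}(V^*,V)}\|F(e^{\lambda\cdot})\|$ all vanish as $\lambda\to+\infty$ --- is exactly the algebra the paper develops later in Proposition \ref{14/08/2013(70)} and the surrounding resolvent propositions of Section 3, so you are front-loading that material. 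What each approach buys: yours dispenses with the separate proofs of density, invariance and closedness (closedness comes for free once $\mathcal{A}_0$ is identified as the generator) at the cost of needing invertibility of $\Delta(\lambda)$ for one large $\lambda$; the paper's keeps Section 2 free of any resolvent computation but must verify the three core hypotheses by hand. One small point to make explicit in your write-up: after producing $c\in V$ you should note that $A\phi_0+F\phi_1=\lambda\phi_0-f_0\in H$ because $\phi_0\in V\subset H$ and $f_0\in H$, which is what places $\phi$ in $\mathscr{D}(\mathcal{A}_0)$; you implicitly use this but it deserves a sentence.
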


This theorem will result from the following several propositions according to Theorem 1.9 in Davies
\cite{ebd80}.

\begin{proposition}
For each $t\ge 0$, we have  ${\cal S}(t){{\mathscr D}({\cal A})}\subset {\mathscr D}({\cal A})$.
\end{proposition}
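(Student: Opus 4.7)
My plan is to lift the regularity of the solution by invoking Theorem \ref{29/04/16(3)}, then to verify the three defining properties of $\mathscr{D}(\mathcal{A})$ for the translated state. Fix $\phi=(\phi_0, \phi_1) \in \mathscr{D}(\mathcal{A})$ and $t \ge 0$, and pick any $T \ge t$. The hypotheses $\phi_1 \in W^{1,2}([-r, 0], V)$, $\phi_0 = \phi_1(0) - D\phi_1 \in V$, and $A\phi_0 + F\phi_1 \in H$ are exactly those required by Theorem \ref{29/04/16(3)}; its conclusion yields $x \in W^{1,2}([-r, T], V)$ and $y := x - Dx_\cdot \in W^{1,2}([0, T], V) \cap W^{2,2}([0, T], V^*) \subset C^1([0, T], H)$.

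Writing $\mathcal{S}(t)\phi = (y(t), x_t)$, the first two requirements for membership in $\mathscr{D}(\mathcal{A})$ are then immediate. One has $x_t \in W^{1,2}([-r, 0], V)$ by translation-invariance of the $W^{1,2}$ norm applied to the restriction of $x$ to $[t-r, t]$. The compatibility condition $y(t) = x_t(0) - Dx_t \in V$ follows from the one-dimensional Sobolev embedding $W^{1,2}([0, T], V) \hookrightarrow C([0, T], V)$ applied to $y$, together with the obvious identification $y(t) = x(t) - Dx_t = x_t(0) - Dx_t$.

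The substantial step is to verify $A y(t) + F x_t \in H$. I would argue that the differential equation $y'(t) = A y(t) + F x_t$ holds pointwise in $V^*$ for every $t \in [0, T]$; since $y \in C^1([0, T], H)$ gives $y'(t) \in H$ pointwise, the conclusion follows. The prerequisite is $F x_\cdot \in W^{1,2}([0, T], V^*)$: the piece $F_1 x(\cdot - r)$ lies there because $F_1 \in \mathscr{L}(V, V^*)$ and $x$ translates from $W^{1,2}([-r, T], V)$, while a Fubini-type argument shows that $t \mapsto x_t$ belongs to $W^{1,2}([0, T], L^2([-r, 0], V))$ with weak derivative $(x')_t$, whence $F_2 x_\cdot \in W^{1,2}([0, T], V^*)$. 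Since $y \in C([0, T], V)$ makes $A y(\cdot)$ continuous into $V^*$, both sides of $y' = A y + F x_\cdot$ are continuous in $V^*$; the identity holds almost everywhere as the differential form of (\ref{25/04/16(2)}) and extends by continuity to every $t \in [0, T]$. I expect this pointwise promotion of the neutral equation to be the only real subtlety; everything else amounts to a careful reading of Theorem \ref{29/04/16(3)}.
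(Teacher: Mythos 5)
Your proposal is correct and follows the same route as the paper: invoke Theorem \ref{29/04/16(3)} to upgrade the regularity of $x$ and $y=x-Dx_\cdot$, then read off the three membership conditions for ${\mathscr D}({\cal A})$, with $Ay(t)+Fx_t=y'(t)\in H$ coming from $y\in C^1([0,T],H)$. The paper states this last identification without comment, whereas you justify the pointwise (rather than a.e.) validity of the equation via continuity of both sides; this is a worthwhile elaboration but not a different argument.
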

\begin{proof}
If $(\phi_0, \phi_1)\in {\mathscr D}({\cal A})$ and $(x(\cdot) - Dx_\cdot, x_\cdot)$ is the unique solution of equation
 (\ref{25/04/16(1)})  with initial data $(\phi_0, \phi_1)$, then for each $T\ge 0$, we get from Theorem \ref{29/04/16(3)} that $x\in W^{1, 2}([-r, T], V)$ and
\[
x(\cdot)-Dx_\cdot \in W^{1, 2}([0, T], V)\cap W^{2,2}([0, T], V^*)\subset C^1([0, T], H).\]
 Therefore, for each $t\ge 0$, we have  $x_t\in W^{1, 2}([-r, 0], V)$ and
\[
A(x(t) -Dx_t)  + Fx_t  = (x(t) -Dx_t)'\in H \hskip 15pt \hbox{and}\hskip 15pt x'_t\in L^2([-r, 0], V),\]
 a fact which immediately implies that $(x(t)- Dx_t, x'_t)\in {\mathscr D}({\cal A})$ for each $t\ge 0$. The proof is thus  complete.
\end{proof}

\begin{proposition}
The domain ${\mathscr D}({\cal A})$ is dense in ${\cal H}$.
\end{proposition}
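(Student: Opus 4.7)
The plan is to use the standard $C_0$-semigroup averaging trick. Given $\psi=(\psi_0,\psi_1)\in\mathcal{H}$, I would consider the Bochner average
\[
\phi^h := \frac{1}{h}\int_0^h \mathcal{S}(s)\psi\,ds \in \mathcal{H},\qquad h>0.
\]
Strong continuity of $\mathcal{S}$ (property (iii) in the previous theorem) immediately gives $\phi^h\to\psi$ in $\mathcal{H}$ as $h\to 0^+$, so the density claim reduces to checking $\phi^h\in\mathscr{D}(\mathcal{A})$ for each $h>0$.

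To unpack $\phi^h$, let $x(\cdot)=x(\cdot,\psi)$ denote the solution to (\ref{25/04/16(1)}) with initial data $\psi$, and write $y(t)=x(t)-Dx_t$. By Theorem \ref{29/04/16(1)}, $x\in L^2([-r,T],V)$ and $y\in L^2([0,T],V)\cap W^{1,2}([0,T],V^*)\subset C([0,T],H)$. The components of $\phi^h=(\phi_0^h,\phi_1^h)$ are then
\[
\phi_0^h=\frac{1}{h}\int_0^h y(s)\,ds,\qquad \phi_1^h(\theta)=\frac{1}{h}\int_\theta^{h+\theta}x(u)\,du,\quad\theta\in[-r,0].
\]
The first two membership conditions are routine. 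The explicit formula exhibits $\phi_1^h$ as absolutely continuous in $\theta$ with derivative $(x(\theta+h)-x(\theta))/h\in L^2([-r,0],V)$, so $\phi_1^h\in W^{1,2}([-r,0],V)$. Commuting the time integral with the bounded linear map $D$ from Lemma \ref{01/06/16(1)} yields $D\phi_1^h=\frac{1}{h}\int_0^h Dx_s\,ds$, and combined with $\phi_1^h(0)=\frac{1}{h}\int_0^h x(s)\,ds$ this gives $\phi_0^h=\phi_1^h(0)-D\phi_1^h$; the latter automatically lies in $V$ since $\phi_1^h(0)\in V$ (trace) and $D\phi_1^h\in V$.

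The crux is verifying the third condition $A\phi_0^h+F\phi_1^h\in H$. I would differentiate the integral equation (\ref{25/04/16(2)}) to obtain the identity $y'(s)=Ay(s)+Fx_s$ in $V^*$ for almost every $s$, which is consistent with $y\in W^{1,2}([0,T],V^*)$ and $Ay+Fx_\cdot\in L^2([0,T],V^*)$. Integrating from $0$ to $h$ and commuting the bounded operators $A:V\to V^*$ and $F:L^2([-r,0],V)\to V^*$ with the Bochner integrals then produces
\[
A\phi_0^h+F\phi_1^h=\frac{1}{h}\int_0^h\bigl(Ay(s)+Fx_s\bigr)\,ds=\frac{y(h)-y(0)}{h},
\]
which lies in $H$ because $y\in C([0,T],H)$. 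This is the step where an \emph{a priori} $V^*$-valued quantity collapses into $H$, and it is the only non-routine part of the argument; everything else is formal commutation of time averages with the bounded operators $D$, $A$, and $F$. Once this is in hand, $\phi^h\in\mathscr{D}(\mathcal{A})$ and the density of $\mathscr{D}(\mathcal{A})$ in $\mathcal{H}$ follows.
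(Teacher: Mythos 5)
Your proposal is correct and follows essentially the same route as the paper: average $\mathcal{S}(s)\psi$ over $[0,h]$, identify the two components of the average explicitly, check $W^{1,2}$-regularity and the compatibility condition $\phi_0^h=\phi_1^h(0)-D\phi_1^h$ by commuting the time integral with $D$, and obtain $A\phi_0^h+F\phi_1^h=(y(h)-y(0))/h\in H$ from the integrated equation and $y\in C([0,T],H)$. The $1/h$ normalization is the only cosmetic difference from the paper's argument.
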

\begin{proof}
Since ${\cal S}(t)$ is strongly continuous, we have
\[
\lim_{\varepsilon\to 0}\varepsilon^{-1}\int^\varepsilon_0 {\cal S}(t)\phi dt = \phi\hskip 15pt \hbox{for each}\hskip 10pt \phi\in {\cal H}.\]
Hence, it suffices to prove that for each $\varepsilon>0$,
\[
\int^\varepsilon_0 {\cal S}(t)\phi dt = \Big(\int^\varepsilon_0 (x(t)- Dx_t)dt, \int^\varepsilon_0 x_t(\cdot)dt\Big)\in {\mathscr D}({\cal A}),\]
where $x$  is the unique solution of equation (\ref{25/04/16(1)}).

Since $x\in L^2([-r, T], V)$ for each $T>0$, we have that for any $\theta\in [-r, 0]$,
\[
\Big(\int^\varepsilon_0 x_sds\Big)(\theta)= \int^\varepsilon_0 x(s+\theta)ds = \int^{\varepsilon+\theta}_0 x(u)du - \int^\theta_0 x(u)du,\]
which immediately implies
\[
\Big(\int^\varepsilon_0 x_tdt\Big)(\cdot) \in W^{1, 2}([-r, 0], V)\hskip 10pt \hbox{and}\hskip 10pt
\Big(\int^\varepsilon_0 x_tdt\Big)(0)= \int^\varepsilon_0 x(t)dt.\]
Hence, we have
\[
\begin{split}
 \int^\varepsilon_0 x(t)dt - \int^\varepsilon_0 Dx_tdt = \Big(\int^\varepsilon_0 x_t dt\Big)(0) -D\int^\varepsilon_0 x_tdt.
\end{split}
\]
Further, since $x\in L^2([0, \varepsilon], V)\cap W^{1, 2}([0, \varepsilon], V^*)\subset C([0, \varepsilon], H)$, we also obtain
\[
\begin{split}
A\int^\varepsilon_0 (x(t)- Dx_t)dt&\, + F_1\int^\varepsilon_0 x(t-r)dt + F_2\int^\varepsilon_0 x_tdt\\
&= \int^\varepsilon_0 \frac{d(x(t)-Dx_t)}{dt}dt = x(\varepsilon)-Dx_\varepsilon -\phi_0\in H.
\end{split}
\]
The proof is thus complete.
\end{proof}

\begin{proposition}
If $\phi=(\phi_0, \phi_1)\in {\mathscr D}({\cal A})$, then we have
\[
\lim_{t\downarrow 0}\frac{{\cal S}(t)\phi - \phi}{t} = {\cal A}\phi = (A\phi_0+ F\phi_1, \phi'_1).\]
\end{proposition}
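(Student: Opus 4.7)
Let $x$ be the solution from Theorem \ref{29/04/16(1)} and set $y(t) = x(t) - D x_t$, so that $y(0) = \phi_0$ and
\[
\frac{\mathcal{S}(t)\phi - \phi}{t} = \Bigl(\frac{y(t) - \phi_0}{t},\ \frac{x_t - \phi_1}{t}\Bigr).
\]
The goal is to show that the two components tend respectively to $A\phi_0 + F\phi_1$ in $H$ and to $\phi_1'$ in $L^2([-r,0], V)$. Because $\phi \in \mathscr{D}(\mathcal{A})$, the hypotheses of Theorem \ref{29/04/16(3)} are met, so $x \in W^{1,2}([-r,T], V)$ for every $T \ge 0$; in particular $t \mapsto F x_t$ belongs to $W^{1,2}([0,T], V^*)$ with $(F x_{\cdot})(0) = F\phi_1$.

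\textbf{First component.} The evolution $y' = A y + F x_\cdot$ with $y(0) = \phi_0$ falls into the setting of Theorem \ref{23/05/16(10)}(iv): the inhomogeneity is in $W^{1,2}([0,T], V^*)$, the initial value $\phi_0$ lies in $V$, and $A\phi_0 + F\phi_1 \in H$ by the very definition of $\mathscr{D}(\mathcal{A})$. That theorem yields $y \in C^1([0,T], H)$ together with the explicit formula
\[
y'(t) = e^{tA}(A\phi_0 + F\phi_1) + \int_0^t e^{(t-s)A} (F x_{\cdot})'(s)\, ds.
\]
Setting $t = 0$ gives $y'(0) = A\phi_0 + F\phi_1 \in H$, and since $y' \in C([0,T], H)$,
\[
\frac{y(t) - \phi_0}{t} = \frac{1}{t}\int_0^t y'(s)\, ds \longrightarrow y'(0) = A\phi_0 + F\phi_1 \quad \text{in } H, \text{ as } t \downarrow 0.
\]

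\textbf{Second component.} Since $x \in W^{1,2}([-r,T], V)$ and $x|_{[-r,0]} = \phi_1$, for each $\theta \in [-r, 0]$ and $t > 0$ small,
\[
\frac{x_t(\theta) - \phi_1(\theta)}{t} = \frac{x(t+\theta) - x(\theta)}{t} = \frac{1}{t}\int_0^t x'(s+\theta)\, ds.
\]
Applying Cauchy--Schwarz in $s$ and then Fubini,
\[
\int_{-r}^0 \Bigl\|\frac{1}{t}\int_0^t x'(s+\theta)\, ds - x'(\theta)\Bigr\|_V^2 d\theta \le \frac{1}{t}\int_0^t \int_{-r}^0 \|x'(s+\theta) - x'(\theta)\|_V^2\, d\theta\, ds.
\]
The inner integral is the squared $L^2([-r,0], V)$ distance between the shift $x'(s+\cdot)$ and $x'(\cdot)$, which tends to $0$ as $s \downarrow 0$ by the standard continuity of translation in the Bochner space $L^2([-r, T], V)$. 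Consequently the right-hand side tends to $0$, and since $x'|_{[-r,0]} = \phi_1'$, the second component converges to $\phi_1'$ in $L^2([-r,0], V)$.

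\textbf{Main obstacle.} The delicate point is that $\phi \in \mathscr{D}(\mathcal{A})$ only ensures $A\phi_0 + F\phi_1 \in H$ as a whole and not $A\phi_0 \in H$ in isolation; therefore one cannot split $y(t) = e^{tA}\phi_0 + \int_0^t e^{(t-s)A} F x_s\, ds$ and treat the two terms separately in $H$, since $(e^{tA}\phi_0 - \phi_0)/t$ need not converge there. The role of Theorem \ref{23/05/16(10)}(iv) is precisely to process the combined quantity $A\phi_0 + f(0)$ as a single $H$-element, which is exactly what bypasses this obstacle.
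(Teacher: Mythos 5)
Your proof is correct and follows essentially the same route as the paper: the paper likewise obtains the first component from the $C^1([0,T],H)$ regularity of $y=x-Dx_\cdot$ supplied by Theorem \ref{29/04/16(3)} (which in turn rests on Theorem \ref{23/05/16(10)}(iv)), and handles the second component by exactly the same translation-continuity argument for $x'$ in $L^2([-r,0],V)$. Your explicit identification of $y'(0)=A\phi_0+F\phi_1$ via the variation-of-constants formula for $y'$ merely spells out a step the paper leaves implicit.
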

\begin{proof}
Let $\phi=(\phi_0, \phi_1)\in {\mathscr D}({\cal A})$. Then it is known by Theorem \ref{29/04/16(3)} that the  corresponding solution $x$ of   (\ref{25/04/16(1)}) with initial $\phi$ is in $C^1([0, T], H)$ for each $T>0$. Hence, by the equality (\ref{05/06/06(1178)}) we have
\begin{equation}
\label{24/05/16(1)}
\lim_{t\downarrow 0}\Big\|\frac{x(t)- Dx_t-\phi_0}{t} - x'(0) - Dx'_0\Big\|_H =  \lim_{t\downarrow 0}\Big\|\frac{x(t)-Dx_t-\phi_0}{t} - A\phi_0 - F\phi_1\Big\|_H=0.
\end{equation}
On the other hand, as $x\in W^{1, 2}([-r, T], V)$ for each $T>0$ according to Theorem  \ref{29/04/16(3)}, we can write for $\theta\in [-r, 0]$ and $t>0$ that
\begin{equation}
\label{24/05/16(2)}
\frac{x_t(\theta)-\phi_1(\theta)}{t} = \frac{1}{t}\int^{t+\theta}_\theta x'(s)ds.
\end{equation}
Since $x'\in L^2([-r, T], V)$ for each $T>0$, we have
\[
\lim_{t\downarrow 0}\int^0_{-r} \Big\|\frac{1}{t}\int^{\theta+t}_\theta x'(s)ds -x'(\theta)\Big\|^2_V=0.\]
Therefore, it is easy to get from (\ref{24/05/16(2)}) that
\begin{equation}
\label{24/05/16(3)}
\lim_{t\downarrow 0}\Big\|\frac{x_t-\phi_1}{t}-\phi'_1\Big\|_{L^2([-r, 0], V)}=0.
\end{equation}
The conclusion follows from (\ref{24/05/16(1)}) and (\ref{24/05/16(3)}) and the proof is thus complete.
\end{proof}

\begin{proposition}
The map ${\cal A}:\,  {\mathscr D}({\cal A})\subset {\cal H}\to {\cal H}$  is a closed operator.
\end{proposition}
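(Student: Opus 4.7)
The plan is the standard sequential-closedness argument: take $\phi^n=(\phi_0^n,\phi_1^n)\in\mathscr{D}(\mathcal{A})$ with $\phi^n\to\phi=(\phi_0,\phi_1)$ in $\mathcal{H}$ and $\mathcal{A}\phi^n\to\psi=(\psi_0,\psi_1)$ in $\mathcal{H}$, then verify that $\phi\in\mathscr{D}(\mathcal{A})$ and $\mathcal{A}\phi=\psi$. Unpacking the definition of $\mathcal{H}$ and of $\mathcal{A}$, the hypotheses read $\phi_0^n\to\phi_0$ in $H$, $\phi_1^n\to\phi_1$ in $L^2([-r,0],V)$, $A\phi_0^n+F\phi_1^n\to\psi_0$ in $H$, and $(\phi_1^n)'\to\psi_1$ in $L^2([-r,0],V)$, and I must check the three membership conditions that define $\mathscr{D}(\mathcal{A})$.

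First I would verify that $\phi_1\in W^{1,2}([-r,0],V)$ with $\phi_1'=\psi_1$: since $\phi_1^n\to\phi_1$ and $(\phi_1^n)'\to\psi_1$ in $L^2([-r,0],V)$, pairing against smooth compactly supported test functions and integrating by parts in the distributional sense identifies $\psi_1$ as the weak derivative of $\phi_1$. Consequently $\phi_1^n\to\phi_1$ in $W^{1,2}([-r,0],V)$, and the continuous embedding $W^{1,2}([-r,0],V)\hookrightarrow C([-r,0],V)$ gives trace convergence $\phi_1^n(-r)\to\phi_1(-r)$ and $\phi_1^n(0)\to\phi_1(0)$ in $V$.

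With these traces in hand I would recover the compatibility condition $\phi_0=\phi_1(0)-D\phi_1\in V$. Indeed, $D\phi_1^n=D_1\phi_1^n(-r)+D_2\phi_1^n\to D_1\phi_1(-r)+D_2\phi_1=D\phi_1$ in $V$ by boundedness of $D_1\in\mathscr{L}(V)$ and $D_2\in\mathscr{L}(L^2([-r,0],V),V)$, so $\phi_0^n=\phi_1^n(0)-D\phi_1^n\to\phi_1(0)-D\phi_1$ in $V$; since $V\hookrightarrow H$ continuously and $\phi_0^n\to\phi_0$ in $H$ by assumption, uniqueness of limits in $H$ forces $\phi_0=\phi_1(0)-D\phi_1$, and in particular $\phi_0\in V$.

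For the remaining condition $A\phi_0+F\phi_1\in H$, I would pass to the limit inside $V^*$. Since $A\in\mathscr{L}(V,V^*)$ and $\phi_0^n\to\phi_0$ in $V$, we have $A\phi_0^n\to A\phi_0$ in $V^*$; likewise $F\phi_1^n=F_1\phi_1^n(-r)+F_2\phi_1^n\to F\phi_1$ in $V^*$ using $F_1\in\mathscr{L}(V,V^*)$, $F_2\in\mathscr{L}(L^2([-r,0],V),V^*)$ together with the $V$-convergence of traces and $L^2$-convergence of $\phi_1^n$. Hence $A\phi_0^n+F\phi_1^n\to A\phi_0+F\phi_1$ in $V^*$, while by hypothesis the same sequence converges to $\psi_0$ in $H$, therefore in $V^*$ via $H=H^*\hookrightarrow V^*$. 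Uniqueness of the $V^*$-limit yields $A\phi_0+F\phi_1=\psi_0\in H$, so $\phi\in\mathscr{D}(\mathcal{A})$ and $\mathcal{A}\phi=(A\phi_0+F\phi_1,\phi_1')=(\psi_0,\psi_1)=\psi$, completing the argument. The only nontrivial step is promoting the $L^2$-convergence of $\phi_1^n$ to $W^{1,2}$-convergence with usable trace information; once that is done, all other identifications are routine applications of the boundedness of $A$, $D_1$, $D_2$, $F_1$, $F_2$ between the Gelfand triple $V\hookrightarrow H\hookrightarrow V^*$.
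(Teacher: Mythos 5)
Your proof is correct and follows essentially the same route as the paper's: identify $\psi_1$ as the weak derivative of $\phi_1$ to get $W^{1,2}$-convergence, use the one-dimensional Sobolev embedding to obtain trace convergence in $V$, and then pass to the limit in the compatibility condition and in $A\phi_0^n+F\phi_1^n$ (the latter in $V^*$, matched against the $H$-limit $\psi_0$). The only cosmetic difference is that you track the convergence $\phi_0^n\to\phi_1(0)-D\phi_1$ in $V$ while the paper records the resulting identity in $H$; both yield $\phi_0\in V$.
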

\begin{proof}
Suppose that there exist a sequence $\{(\phi_{0, n}, \phi_{1, n})\}_{n\ge 1}\in {\mathscr D}({\cal A})$ such that
\begin{equation}
\label{24/05/16(5)}
 (\phi_{0, n}, \phi_{1, n})\to (\phi_0, \phi_1)\hskip 10pt \hbox{as}\hskip 10pt n\to\infty\hskip 10pt \hbox{in}\hskip 10pt {\cal H}
\end{equation}
and
\begin{equation}
\label{24/05/16(6)}
{\cal A} (\phi_{0, n}, \phi_{1, n})\to (\psi_0, \psi_1)\hskip 10pt \hbox{as}\hskip 10pt n\to\infty\hskip 10pt \hbox{in}\hskip 10pt {\cal H}.
\end{equation}
We need to show $(\phi_0, \phi_1)\in {\mathscr D}({\cal A})$ and ${\cal A}(\phi_0, \phi_1)=  (\psi_0, \psi_1)$.

Indeed, since $\phi_{1, n}\in W^{1, 2}([-r, 0], V)$ for each $n\ge 1$, it follows from (\ref{24/05/16(5)}) and (\ref{24/05/16(6)}) that
\[
\lim_{n\to\infty} \|\phi_{1, n}-\phi_1\|_{L^2([-r, 0], V)}=0, \hskip 20pt \lim_{n\to\infty} \|\phi'_{1, n}-\psi_1\|_{L^2([-r, 0], V)}=0.\]
Hence, $\phi_1\in W^{1, 2}([-r, 0], V)$ and $\phi'_1=\psi_1$. This implies
\begin{equation}
\label{24/05/16(20)}
\phi_{1, n}\to \phi_1\hskip 10pt \hbox{in}\hskip 10pt W^{1, 2}([-r, 0], V)\hskip 10pt \hbox{as}\hskip 10pt n\to\infty,
\end{equation}
 and by Sobolev's imbedding theorem,
\begin{equation}
\label{24/05/16(21)}
\phi_{1, n}(0)\to \phi_1(0)\hskip 10pt \hbox{ and}\hskip 10pt \phi_{1, n}(-r)\to \phi_1(-r)\hskip 10pt \hbox{in}\hskip 10pt V\hskip 10pt \hbox{as}\hskip 10pt n\to\infty.
\end{equation}
 From (\ref{24/05/16(5)}), we thus have the following equalities
\begin{equation}
\label{24/05/16(22)}
\begin{split}
\phi_{1}(0)=\lim_{n\to\infty}\phi_{1, n}(0)&= \lim_{n\to\infty}(\phi_{0, n}-D_1\phi_{1, n}(-r)-D_2\phi_{1, n})\\
&=\phi_0-D_1\phi_1(-r)-D_2\phi_1\\
 &=\phi_{0}-D\phi_{1}\hskip 10pt \hbox{in}\hskip 10pt H.
\end{split}
\end{equation}
Further, by virtue of (\ref{24/05/16(20)}) and (\ref{24/05/16(21)}) we have
\begin{equation}
\label{24/05/16(25)}
A\phi_{0, n} + F_1\phi_{1, n}(-r) + F_2\phi_{1, n}\to A\phi_{0} + F_1\phi_{1}(-r) + F_2\phi_{1}\hskip 10pt \hbox{in}\hskip 10pt V^*\hskip 10pt \hbox{as}\hskip 10pt n\to\infty.
\end{equation}
On the other hand, we have from (\ref{24/05/16(6)}) that
\begin{equation}
\label{24/05/16(26)}
A\phi_{0, n} + F_1\phi_{1, n}(-r) + F_2\phi_{1, n}\to \psi_0\hskip 10pt  \hbox{in}\hskip 10pt H\hskip 10pt \hbox{as}\hskip 10pt n\to\infty.
\end{equation}
Hence, from (\ref{24/05/16(25)}) and (\ref{24/05/16(26)}) we obtain
\[
\psi_0 = A\phi_{0} + F_1\phi_{1}(-r) + F_2\phi_{1}\in H\]
as desired. Hence, the proof is complete now.
\end{proof}

\section{\large Semigroup and Resolvent}

For each $\lambda\in {\mathbb C}$, we define a linear operator $D(e^{\lambda\cdot}):\, V\to V$ by
\[
D(e^{\lambda\cdot})x = D(e^{\lambda\cdot}x) \hskip 15pt \hbox{for any}\hskip 10pt x\in V.\]
Then it is easy to see that $D(e^{\lambda\cdot})\in {\mathscr L}(V)$. Indeed, for any $x\in V$,
\[
\begin{split}
  \|D(e^{\lambda\cdot})x\|_V &= \|D(e^{\lambda\cdot}x)\|_V\\
   &\le \|D_1(e^{-\lambda r}x)\|_V + \|D_2\|\Big(\int^0_{-r} \|e^{\theta\lambda}x\|^2_Vd\theta\Big)^{1/2}\\
& \le \|D_1\|e^{|\lambda| r}\|x\|_V + \|D_2\|\sqrt{r}e^{|\lambda|r}\|x\|_V.
\end{split}
\]
In a similar way, one can show that $F(e^{\lambda\cdot})\in {\mathscr L}(V, V^*)$.
 For each  $\lambda\in {\mathbb C}$,
we define a linear operator $\Delta(\lambda):\, V\to V^*$ by
\begin{equation}
\label{06/11/16(1)}
\Delta(\lambda) = \Delta(\lambda, A, D, F) = (\lambda I - A)(I - D(e^{\lambda\cdot})) - F(e^{\lambda\cdot})\in {\mathscr L}(V, V^*).
\end{equation}
The resolvent set $\rho(A, D, F)$ is defined as the family of all values $\lambda$ in ${\mathbb C}$ for which the operator $\Delta(\lambda, A, D, F)$ has a bounded inverse  $\Delta(\lambda, A, D, F)^{-1}$ on $V^*$. The operator $\Delta(\lambda, A, D, F)^{-1}$ is called the {\it resolvent\/} of $(A, D, F)$.

The following proposition can be used to establish useful relations between the  resolvent  $\Delta(\lambda, A, D, F)^{-1}$ and resolvent $(\lambda I - {\cal A})^{-1}$ of ${\cal A}$.
\begin{proposition}
\label{14/08/2013(70)}
Let $\lambda\in {\mathbb C}$ and $\psi=(\psi_0, \psi_1)\in {\cal H}$. If $\phi=(\phi_0, \phi_1)\in {\mathscr D}({\cal A)}$ satisfies
\begin{equation}
\label{28/07/2013(2)}
\lambda\phi -{\cal A}\phi = \psi,
\end{equation}
then
\begin{equation}
\label{28/07/2013(5)}
\phi_1(\theta) = e^{\lambda\theta}\phi_1(0) + \int^0_{\theta}e^{\lambda(\theta-\tau)}\psi_1(\tau)d\tau,\hskip 20pt -r\le \theta\le 0,
\end{equation}
and
\begin{equation}
\label{28/07/2013(1)}
\Delta(\lambda) \phi_1(0) =(\lambda I-A)D\Big(\int^0_\cdot e^{\lambda(\cdot-\tau)}\psi_1(\tau)d\tau\Big) + F\Big(\int^0_{\cdot}e^{\lambda(\cdot-\tau)}\psi_1(\tau)d\tau\Big) +\psi_0.
\end{equation}
Conversely, if $ \phi_1(0)\in V$ satisfies the equation (\ref{28/07/2013(1)}) and let $\phi_0 =\phi_1(0)-D\phi_1$ where
\begin{equation}
\label{28/07/2013(20)}
\phi_1(\theta) = e^{\lambda\theta} \phi_1(0) +  \int^0_{\theta}e^{\lambda(\theta-\tau)}\psi_1(\tau)d\tau,\hskip 20pt -r\le \theta\le 0,
\end{equation}
then we have that $\phi_1\in W^{1,2}([-r, 0], V)$, $\phi=(\phi_0, \phi_1)\in {\mathscr D}({\cal A})$ and $\phi$ satisfies (\ref{28/07/2013(2)}).
\end{proposition}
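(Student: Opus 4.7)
\noindent\textit{Proof plan.} The proof is essentially algebraic, built around the description of $\mathcal{A}$ in Theorem~\ref{06/03/2017(1)}. My plan is to unpack the equation $\lambda\phi-\mathcal{A}\phi=\psi$ componentwise and use the fact that the second component yields a first-order linear ODE in $\theta$ whose solution can be read off explicitly, while the first component, combined with the $D$-constraint $\phi_0=\phi_1(0)-D\phi_1$ built into $\mathscr{D}(\mathcal{A})$, reduces to the resolvent identity~(\ref{28/07/2013(1)}) for $\Delta(\lambda)$.

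For the forward direction, I would begin by writing out
\[
\lambda\phi-\mathcal{A}\phi=\bigl(\lambda\phi_0-A\phi_0-F\phi_1,\;\lambda\phi_1-\phi_1'\bigr)=(\psi_0,\psi_1).
\]
The second component is the ODE $\phi_1'(\theta)=\lambda\phi_1(\theta)-\psi_1(\theta)$ in $V$, valid a.e.\ on $[-r,0]$; integrating by variation of constants from $0$ backwards (which is legitimate because $\phi_1\in W^{1,2}([-r,0],V)$ has a well-defined trace $\phi_1(0)\in V$) gives exactly the formula~(\ref{28/07/2013(5)}). Substituting this expression into $\phi_0=\phi_1(0)-D\phi_1$ and exploiting the linearity of $D$ together with the definition $D(e^{\lambda\cdot})\phi_1(0)=D(e^{\lambda\cdot}\phi_1(0))$ yields
\[
\phi_0=\bigl(I-D(e^{\lambda\cdot})\bigr)\phi_1(0)-D\!\left(\int_{\cdot}^{0}e^{\lambda(\cdot-\tau)}\psi_1(\tau)d\tau\right).
\]
An identical decomposition of $F\phi_1$ via $F(e^{\lambda\cdot})$ followed by inserting both expressions into $\lambda\phi_0-A\phi_0-F\phi_1=\psi_0$ and collecting the $\phi_1(0)$ terms on the left gives precisely~(\ref{28/07/2013(1)}) with $\Delta(\lambda)$ as defined in (\ref{06/11/16(1)}).

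For the converse, I would start from $\phi_1(0)\in V$ solving (\ref{28/07/2013(1)}), define $\phi_1$ by~(\ref{28/07/2013(20)}), and set $\phi_0=\phi_1(0)-D\phi_1$. The regularity $\phi_1\in W^{1,2}([-r,0],V)$ follows from $\phi_1(0)\in V$, $\psi_1\in L^2([-r,0],V)$, and direct differentiation of the variation-of-constants formula, which also yields $\phi_1'=\lambda\phi_1-\psi_1\in L^2([-r,0],V)$; in particular the second component of $\lambda\phi-\mathcal{A}\phi=\psi$ holds. To check $\phi\in\mathscr{D}(\mathcal{A})$, note that $\phi_0\in V$ by construction and, reversing the algebraic manipulation above, equation~(\ref{28/07/2013(1)}) is exactly $(\lambda I-A)\phi_0-F\phi_1=\psi_0$, whence $A\phi_0+F\phi_1=\lambda\phi_0-\psi_0\in H$ (since $\phi_0\in V\hookrightarrow H$ and $\psi_0\in H$). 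This simultaneously verifies the membership in $\mathscr{D}(\mathcal{A})$ and the first component of $\lambda\phi-\mathcal{A}\phi=\psi$.

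The computation is essentially bookkeeping; the only step requiring care is the bootstrapping of regularity in the converse, where one must justify that the formula~(\ref{28/07/2013(20)}) produces a genuine element of $W^{1,2}([-r,0],V)$ and that the operators $D$, $F$, $A$ can be applied to it in the right spaces. The boundedness estimates for $D(e^{\lambda\cdot})$ and $F(e^{\lambda\cdot})$ established just before the proposition (and Lemma~\ref{01/06/16(1)}) take care of this, so no new analytic difficulty arises; the main obstacle is simply keeping the algebra organized so that the identity (\ref{28/07/2013(1)}) falls out cleanly in both directions.
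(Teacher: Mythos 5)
Your proposal is correct and follows essentially the same route as the paper: unpack $\lambda\phi-\mathcal{A}\phi=\psi$ componentwise, solve the second component by variation of constants to get (\ref{28/07/2013(5)}), split $D\phi_1$ and $F\phi_1$ into the $D(e^{\lambda\cdot})\phi_1(0)$, $F(e^{\lambda\cdot})\phi_1(0)$ parts plus the integral remainders, and substitute into the first component to produce (\ref{28/07/2013(1)}), reversing the algebra for the converse exactly as the paper does in (\ref{28/07/2013(50)}).
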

\begin{proof}
The equation (\ref{28/07/2013(2)}) can be equivalently written as
\begin{equation}
\label{28/07/2013(10)}
\lambda \phi_1(0) - \lambda D\phi_1 -A\phi_1(0) +AD\phi_1 -F\phi_1=\psi_0,
\end{equation}
and
\begin{equation}
\label{28/07/2013(4)}
\lambda \phi_1(\theta) -{d\phi_1(\theta)}/{d\theta}= \psi_1(\theta)\hskip 15pt \hbox{for}\,\,\,\,\theta\in [-r, 0].
\end{equation}
It is easy to see that (\ref{28/07/2013(4)}) is equivalent to (\ref{28/07/2013(5)}). Hence, if (\ref{28/07/2013(2)}) holds, we get  $\phi_1(0)\in V$ and by virtue of (\ref{28/07/2013(10)}) and (\ref{28/07/2013(5)}), we have
\[
\begin{split}
\Delta(&\lambda, A, D, F)\phi_1(0) \\
=&\,\, \lambda\phi_1(0) -A\phi_1(0) + AD(e^{\lambda\cdot})\phi_1(0) -\lambda D(e^{\lambda\cdot})\phi_1(0) -F(e^{\lambda\cdot})\phi_1(0)\\
=&\,\, \lambda D\phi_1 -AD\phi_1 +F\phi_1 +\psi_0  + AD(e^{\lambda\cdot})\phi_1(0) -\lambda D(e^{\lambda\cdot})\phi_1(0) -F(e^{\lambda\cdot})\phi_1(0)\\
=&\,\, \lambda D(e^{\lambda\cdot})\phi_1(0) + \lambda D\Big(\int^0_\cdot e^{\lambda(\cdot-\tau)}\psi_1(\tau)d\tau\Big) -AD(e^{\lambda\cdot})\phi_1(0) -AD\Big(\int^0_\cdot e^{\lambda(\cdot-\tau)}\psi_1(\tau)d\tau\Big)\\
&\,\, + F(e^{\lambda\cdot})\phi_1(0) + F\Big(\int^0_\cdot e^{\lambda(\cdot-\tau)} \psi_1(\tau)d\tau\Big) +\psi_0\\
&\,\, +AD(e^{\lambda\cdot})\phi_1(0) -\lambda D(e^{\lambda\cdot})\phi_1(0) - F(e^{\lambda\cdot})\phi_1(0)\\
=&\,\, (\lambda I-A)D\Big(\int^0_\cdot e^{\lambda(\cdot-\tau)}\psi_1(\tau)d\tau\Big) + F\Big(\int^0_{\cdot}e^{\lambda(\cdot-\tau)}\psi_1(\tau)d\tau\Big) +\psi_0
\end{split}
\]
which is the equality (\ref{28/07/2013(1)}).

Conversely, if $\phi_1(0)\in V$,  then by a simple calculation it is easy to see that $\phi_1$, defined by (\ref{28/07/2013(20)}), belongs to $W^{1, 2}([-r, 0], V)$. In addition, let $\phi_0=\phi_1(0)-D\phi_1\in V$ and  assume that (\ref{28/07/2013(1)}) holds true. Then from (\ref{28/07/2013(1)}) and  (\ref{28/07/2013(20)}), we get
\begin{equation}
\label{28/07/2013(50)}
\begin{split}
\lambda \phi_0 -A\phi_0 &= \lambda \phi_1(0)-A\phi_1(0)- \lambda D\phi_1 + AD\phi_1\\
&= (\lambda I-A) D\Big(\int^0_\cdot e^{\lambda(\cdot-\tau)}\psi_1(\tau)d\tau\Big) + F\Big(\int^0_\cdot e^{\lambda(\cdot-\tau)}\psi_1(\tau)d\tau\Big) + \psi_0\\
&\,\,\,\,\,\,+ (\lambda I-A)D(e^{\lambda\cdot})\phi_1(0) + F(e^{\lambda\cdot})\phi_1(0)-\lambda D\phi_1+ AD\phi_1\\
&=(\lambda I-A)D\phi_1 +F\phi_1 +\psi_0-\lambda D\phi_1+ AD\phi_1\\
&= F\phi_1 + \psi_0.
\end{split}
\end{equation}
Therefore, $A\phi_0 + F\phi_1 =\lambda\phi_0 -\psi_0\in H$, which is  the first coordinate  relation of  (\ref{28/07/2013(2)}). The second coordinate equality of  (\ref{28/07/2013(2)}) is obvious. The proof is thus complete.
\end{proof}

\begin{proposition}
For each $\lambda\in {\mathbb C}$, the mapping $\Delta(\lambda)$ is injective if and only if $\lambda I -{\cal A}$ is injective.
\end{proposition}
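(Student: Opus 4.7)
The plan is to reduce everything to the previous proposition (Proposition \ref{14/08/2013(70)}) by specializing to the homogeneous case $\psi = 0$. Injectivity of $\lambda I - \mathcal{A}$ is the statement that $(\lambda I-\mathcal{A})\phi = 0$ with $\phi \in \mathscr{D}(\mathcal{A})$ forces $\phi = 0$, while injectivity of $\Delta(\lambda): V \to V^*$ is the statement that $\Delta(\lambda)x = 0$ with $x \in V$ forces $x = 0$. Proposition \ref{14/08/2013(70)} already sets up a two-way correspondence between solutions of $\lambda\phi - \mathcal{A}\phi = \psi$ in $\mathscr{D}(\mathcal{A})$ and solutions $\phi_1(0) \in V$ of $\Delta(\lambda)\phi_1(0) = (\text{RHS}_\psi)$, so the $\psi = 0$ slice gives exactly what we need.

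For the ``$\lambda I - \mathcal{A}$ injective $\Rightarrow$ $\Delta(\lambda)$ injective'' direction, I would take any $x \in V$ with $\Delta(\lambda)x = 0$, set $\phi_1(\theta) := e^{\lambda\theta}x$ for $\theta \in [-r,0]$ and $\phi_0 := \phi_1(0) - D\phi_1 = x - D(e^{\lambda\cdot})x$, and apply the converse part of Proposition \ref{14/08/2013(70)} with $\psi = 0$: this yields $\phi_1 \in W^{1,2}([-r,0], V)$, $\phi = (\phi_0,\phi_1) \in \mathscr{D}(\mathcal{A})$ and $(\lambda I - \mathcal{A})\phi = 0$. Injectivity of $\lambda I - \mathcal{A}$ gives $\phi = 0$, hence $x = \phi_1(0) = 0$.

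For the reverse direction, take any $\phi = (\phi_0,\phi_1) \in \mathscr{D}(\mathcal{A})$ with $(\lambda I - \mathcal{A})\phi = 0$. The direct part of Proposition \ref{14/08/2013(70)} (with $\psi = 0$) yields $\Delta(\lambda)\phi_1(0) = 0$, so by injectivity of $\Delta(\lambda)$ we get $\phi_1(0) = 0$. Then the representation formula (\ref{28/07/2013(5)}) with $\psi_1 = 0$ gives $\phi_1(\theta) = e^{\lambda\theta}\phi_1(0) = 0$ on $[-r,0]$, and the constraint $\phi_0 = \phi_1(0) - D\phi_1 = 0$ closes the argument, so $\phi = 0$.

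There is essentially no hard step here: the previous proposition did all the real work, and the proof is just bookkeeping, checking that the regularity $\phi_1 \in W^{1,2}([-r,0],V)$ needed for $\phi \in \mathscr{D}(\mathcal{A})$ in the first direction holds automatically because $\theta \mapsto e^{\lambda\theta}x$ is smooth with values in $V$. The one thing to be careful about is to keep the two notions of injectivity straight (one is on $V$, the other on $\mathscr{D}(\mathcal{A}) \subset \mathcal{H}$) and to note that in both directions the correspondence $x \leftrightarrow \phi_1(0)$ is a genuine bijection between the kernels, so injectivity transfers in both directions without any loss.
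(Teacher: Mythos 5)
Your proposal is correct and follows essentially the same route as the paper: both reduce to Proposition \ref{14/08/2013(70)} with $\psi=(0,0)$, use the representation $\phi_1(\theta)=e^{\lambda\theta}\phi_1(0)$ together with the constraint $\phi_0=\phi_1(0)-D\phi_1$, and transfer kernel elements back and forth (the paper merely phrases both directions in contrapositive form, exhibiting a nonzero kernel element on one side from a nonzero kernel element on the other).
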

\begin{proof}
Let $\psi_0=0$, $\psi_1(\cdot)\equiv 0$ in Proposition \ref{14/08/2013(70)}. If $\Delta(\lambda)x=0$ with $x\not= 0$, then we may take $\phi=(\phi_0, \phi_1)$ where
\[
\phi_0 =x-D\phi_1,\hskip 15pt \phi_1(\theta) = e^{\lambda\theta}x,\hskip 15pt \theta\in [-r, 0].\]
Hence, $\phi=(\phi_0, \phi_1)\not= 0$ and
\[
(\lambda I -{\cal A})\phi=0.\]
Conversely, suppose that there exists a non-zero $\phi=(\phi_0, \phi_1)\in {\mathscr D}({\cal A})$ satisfying
$(\lambda I - {\cal A})\phi=0$. Then it can't happen that $\phi_1(\theta)=0$ for all $\theta\in [-r, 0]$ since $\phi_0=\phi_1(0)-D\phi_1=0$ otherwise. Hence, there exists a value $\theta\in [-r, 0]$ such that
\[
\phi_1(0) = e^{-\lambda\theta}\phi_1(\theta)\not= 0.\]
Let $x=\phi_1(0)\not= 0$, then by virtue of (\ref{28/07/2013(1)}) we have
\[
\Delta(\lambda)x=0.\]
The proof is thus complete.
\end{proof}

\begin{proposition}
Suppose that  $\Delta(\lambda)V= V^*$ for some $\lambda\in {\mathbb C}$, then
\[
(\lambda I -{\cal A}){\mathscr D}({\cal A})={\cal H}.\]
\end{proposition}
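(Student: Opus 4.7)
The plan is to leverage Proposition \ref{14/08/2013(70)} in its converse direction, which reduces the surjectivity of $\lambda I - \mathcal{A}$ to the surjectivity of $\Delta(\lambda)$.

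First I would fix an arbitrary $\psi=(\psi_0,\psi_1)\in{\cal H}$ and form the auxiliary function
\[
\eta(\theta) := \int^0_\theta e^{\lambda(\theta-\tau)}\psi_1(\tau)\,d\tau,\qquad \theta\in[-r,0].
\]
Since $\psi_1\in L^2([-r,0],V)$ and the exponential weight is bounded on the compact interval $[-r,0]$, a standard computation shows $\eta\in W^{1,2}([-r,0],V)\subset C([-r,0],V)$, with $\eta'(\theta)=\lambda\eta(\theta)-\psi_1(\theta)$. In particular $\eta(-r)\in V$ and $\eta\in L^2([-r,0],V)$, so by the definitions of $D$ and $F$ one has $D\eta\in V$ and $F\eta\in V^*$. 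Consequently the right-hand side of (\ref{28/07/2013(1)}),
\[
g := (\lambda I-A)D\eta + F\eta + \psi_0,
\]
lies in $V^*$ (using $A\in{\mathscr L}(V,V^*)$ and $H\hookrightarrow V^*$).

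Next, because the hypothesis $\Delta(\lambda)V=V^*$ tells us that $\Delta(\lambda):V\to V^*$ is surjective, I can select some $\phi_1(0)\in V$ with $\Delta(\lambda)\phi_1(0)=g$. Then I define $\phi_1:[-r,0]\to V$ via the formula (\ref{28/07/2013(20)}),
\[
\phi_1(\theta)=e^{\lambda\theta}\phi_1(0)+\eta(\theta),\qquad \theta\in[-r,0],
\]
which, together with the fact that $\phi_1(0)\in V$ and $\eta\in W^{1,2}([-r,0],V)$, gives $\phi_1\in W^{1,2}([-r,0],V)$. Finally I set $\phi_0:=\phi_1(0)-D\phi_1\in V$ and put $\phi:=(\phi_0,\phi_1)$.

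All the hypotheses of the converse part of Proposition \ref{14/08/2013(70)} are now verified, so this yields $\phi\in{\mathscr D}({\cal A})$ together with $\lambda\phi-{\cal A}\phi=\psi$. Since $\psi\in{\cal H}$ was arbitrary, this establishes $(\lambda I-{\cal A}){\mathscr D}({\cal A})={\cal H}$. The only mildly subtle point is the verification that $D\eta$ and $F\eta$ are well-defined and that the right-hand side $g$ really lies in $V^*$ (so that the surjectivity assumption may be invoked); everything else is a direct bookkeeping step using Proposition \ref{14/08/2013(70)}.
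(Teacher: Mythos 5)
Your proposal is correct and follows essentially the same route as the paper: given $\psi\in{\cal H}$, use the surjectivity of $\Delta(\lambda)$ to solve equation (\ref{28/07/2013(1)}) for $\phi_1(0)\in V$, build $\phi_1$ from formula (\ref{28/07/2013(20)}) and set $\phi_0=\phi_1(0)-D\phi_1$, then invoke the converse part of Proposition \ref{14/08/2013(70)}. The only difference is that you spell out the verification that the right-hand side $g$ lies in $V^*$, which the paper takes for granted.
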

\begin{proof}
For  $\lambda\in {\mathbb C}$ and $\psi=(\psi_0, \psi_1)\in {\cal H}$, since  $\Delta(\lambda)V= V^*$, there exists an element $\phi_1(0)\in V$ such that
\[
\Delta(\lambda)\phi_1(0) = (\lambda I -A)D\Big(\int^0_{\cdot}e^{\lambda(\cdot-\tau)}\psi_1(\tau)d\tau\Big) + F\Big(\int^0_{\cdot}e^{\lambda(\cdot-\tau)}\psi_1(\tau)d\tau\Big)+\psi_0\in V^*.\]
Let $\phi_0=\phi_1(0)-D\phi_1\in V$ where $\phi_1$ is given by
\[
\phi_1(\theta) = e^{\lambda\theta}\phi_1(0) + \int^0_{\theta}e^{\lambda(\cdot-\tau)}\psi_1(\tau)d\tau,\hskip 15pt \theta\in [-r, 0].\]
Then by Proposition \ref{14/08/2013(70)}, we have that $\phi_1\in W^{1, 2}([-r, 0], V)$, $\phi=(\phi_0, \phi_1)\in {\mathscr D}({\cal A})$ and $\phi$ satisfies $\lambda\phi - {\cal A}\phi=\psi\in {\cal H}$ as desired. The proof is complete now.
\end{proof}

\begin{proposition}
Let $\lambda\in {\mathbb C}$. If there exists $C_1>0$ such that
\begin{equation}
\label{05/09/16}
\|x\|_V \le C_1\|\Delta(\lambda)x\|_{V^*}\hskip 10pt \hbox{for each}\hskip 10pt x\in V,
\end{equation}
 then there exists a constant $C_2>0$ such that
\[
\|\phi\|_{\cal H} \le C_2\|(\lambda I -{\cal A})\phi\|_{\cal H}\hskip 10pt \hbox{for each}\hskip 10pt \phi\in {\mathscr D}({\cal A}).\]
\end{proposition}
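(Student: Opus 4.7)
The plan is to apply Proposition \ref{14/08/2013(70)} in order to translate the hypothesis (\ref{05/09/16}) on $\Delta(\lambda)$ into the desired estimate on $\lambda I - {\cal A}$. Fix $\phi=(\phi_0, \phi_1)\in{\mathscr D}({\cal A})$ and set $\psi=(\psi_0, \psi_1):=(\lambda I-{\cal A})\phi\in{\cal H}$. By Proposition \ref{14/08/2013(70)}, $\phi_1(0)\in V$ satisfies
\[
\phi_1(\theta) = e^{\lambda\theta}\phi_1(0) + g(\theta),\qquad g(\theta):=\int^0_\theta e^{\lambda(\theta-\tau)}\psi_1(\tau)d\tau,\qquad\theta\in[-r,0],
\]
and $\Delta(\lambda)\phi_1(0) = (\lambda I-A)Dg + Fg + \psi_0$.

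First I would bound the right-hand side of the $\Delta(\lambda)$-equation in $V^*$. Using Cauchy--Schwarz on $g$, one obtains $\sup_{\theta\in[-r,0]}\|g(\theta)\|_V\le \sqrt{r}e^{|\lambda|r}\|\psi_1\|_{L^2([-r,0],V)}$ and $\|g\|_{L^2([-r,0],V)}\le r\,e^{|\lambda|r}\|\psi_1\|_{L^2([-r,0],V)}$. Combining these with the definitions of $D$ and $F$ and the fact that $D_1\in{\mathscr L}(V)$, $D_2\in{\mathscr L}(L^2([-r,0],V),V)$, $F_1\in{\mathscr L}(V,V^*)$, $F_2\in{\mathscr L}(L^2([-r,0],V),V^*)$, yields constants $k_1(\lambda),k_2(\lambda)>0$ with $\|Dg\|_V\le k_1(\lambda)\|\psi_1\|_{L^2([-r,0],V)}$ and $\|Fg\|_{V^*}\le k_2(\lambda)\|\psi_1\|_{L^2([-r,0],V)}$. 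Since $A\in{\mathscr L}(V,V^*)$ and $V\hookrightarrow V^*$, also $\|(\lambda I-A)Dg\|_{V^*}\le k_3(\lambda)\|\psi_1\|_{L^2([-r,0],V)}$. Finally, the continuous embedding $H\hookrightarrow V^*$ gives $\|\psi_0\|_{V^*}\le k_4\|\psi_0\|_H$. Therefore there exists $K(\lambda)>0$ such that
\[
\|\Delta(\lambda)\phi_1(0)\|_{V^*}\le K(\lambda)\bigl(\|\psi_0\|_H+\|\psi_1\|_{L^2([-r,0],V)}\bigr)\le K(\lambda)\|\psi\|_{\cal H}.
\]
Applying (\ref{05/09/16}) then gives $\|\phi_1(0)\|_V\le C_1 K(\lambda)\|\psi\|_{\cal H}$.

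Next I would estimate $\|\phi\|_{\cal H}^2 = \|\phi_0\|_H^2+\|\phi_1\|_{L^2([-r,0],V)}^2$ using the representation of $\phi_1$ above. From $\phi_1(\theta)=e^{\lambda\theta}\phi_1(0)+g(\theta)$, one gets directly
\[
\|\phi_1\|_{L^2([-r,0],V)}\le \sqrt{r}\,e^{|\lambda|r}\|\phi_1(0)\|_V + r\,e^{|\lambda|r}\|\psi_1\|_{L^2([-r,0],V)},
\]
and similarly $\|\phi_1(-r)\|_V\le e^{|\lambda|r}\|\phi_1(0)\|_V+\sqrt{r}\,e^{|\lambda|r}\|\psi_1\|_{L^2([-r,0],V)}$. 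Since $\phi_0=\phi_1(0)-D\phi_1\in V$ and $V\hookrightarrow H$ with constant $\nu^{1/2}$, combining the bound $\|D\phi_1\|_V\le\|D_1\|\|\phi_1(-r)\|_V+\|D_2\|\|\phi_1\|_{L^2([-r,0],V)}$ with the two displays above yields a constant $k_5(\lambda)>0$ with
\[
\|\phi_0\|_H\le \nu^{1/2}\bigl(\|\phi_1(0)\|_V+\|D\phi_1\|_V\bigr)\le k_5(\lambda)\bigl(\|\phi_1(0)\|_V+\|\psi_1\|_{L^2([-r,0],V)}\bigr).
\]
Inserting the earlier bound on $\|\phi_1(0)\|_V$ into these estimates produces the desired inequality $\|\phi\|_{\cal H}\le C_2\|(\lambda I-{\cal A})\phi\|_{\cal H}$ for some $C_2=C_2(C_1,\lambda,D,F,r,\nu)>0$.

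The only delicate point is the proper bookkeeping of which spaces host the various pieces; in particular one must use $H\hookrightarrow V^*$ (to absorb the $\psi_0$ term on the $V^*$-side) and $V\hookrightarrow H$ (to pass from $\phi_0\in V$ back to $\|\phi_0\|_H$). Neither step is a conceptual obstacle, and all other estimates are routine exponential-kernel and Cauchy--Schwarz computations together with the boundedness assumptions on $D_1,D_2,F_1,F_2$.
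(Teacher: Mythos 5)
Your proposal is correct and follows essentially the same route as the paper: both pass through Proposition \ref{14/08/2013(70)} to write $\Delta(\lambda)\phi_1(0)=(\lambda I-A)Dg+Fg+\psi_0$, bound that right-hand side in $V^*$ by $\|\psi\|_{\cal H}$ (the paper packages $(\lambda I-A)Dg+Fg$ as a bounded operator $\Sigma_\lambda\psi_1$, which you handle inline), invoke the hypothesis to control $\|\phi_1(0)\|_V$, and then recover $\|\phi_1\|_{L^2([-r,0],V)}$ and $\|\phi_0\|_H$ from the exponential representation of $\phi_1$ and $\phi_0=\phi_1(0)-D\phi_1$ via the embeddings $V\hookrightarrow H\hookrightarrow V^*$.
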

\begin{proof}
First note that for any $\phi_1(0)\in V$ and $\psi_1\in L^2([-r, 0], V)$, the function
\begin{equation}
\label{05/09/16(2)}
\phi_1(\theta) = e^{\lambda\theta}\phi_1(0) + \int^0_\theta e^{\lambda(\theta-\tau)}\psi_1(\tau)d\tau,\hskip 15pt \theta\in [-r, 0],
\end{equation}
satisfies the relations
\begin{equation}
\label{07/09/16(3)}
\begin{split}
\|\phi_1\|_{L^2([-r, 0], V)} &\le \|e^{\lambda\cdot}\phi_1(0)\|_{L^2([-r, 0], V)} + \Big\|\int^0_\cdot e^{\lambda(\cdot-\tau)}\psi_1(\tau)d\tau\Big\|_{L^2([-r, 0], V)}\\
&\le c_1(\lambda)\|\phi_1(0)\|_V + c_2(\lambda)\|\psi_1\|_{L^2([-r, 0], V)},
\end{split}
\end{equation}
where $c_1(\lambda),\,c_2(\lambda)>0$.

For $\lambda\in {\mathbb C}$, we define a linear operator $\Sigma_\lambda:\,L^2([-r, 0], V)\to V^*$ by
\begin{equation}
\label{07/09/2016(10)}
\Sigma_\lambda\varphi = (\lambda I -{A})D\Big(\int^0_\cdot e^{\lambda(\cdot-\tau)}\varphi(\tau)d\tau\Big) + F\Big(\int^0_\cdot e^{\lambda(\cdot-\tau)}\varphi(\tau)d\tau\Big),\hskip 15pt \forall\,\varphi\in L^2([-r, 0], V).
\end{equation}
Then it is easy to see that $\Sigma_\lambda\in {\mathscr L}(L^2([-r, 0], V), V^*).$ Indeed, for each $\varphi\in L^2([-r, 0], V)$ we have
\begin{equation}
\label{07/09/16(1)}
\begin{split}
\|\Sigma_\lambda\varphi\|_{V^*} \le &\,\, \Big\|(\lambda I -A)D\Big(\int^0_\cdot e^{\lambda(\cdot-\tau)}\varphi(\tau)d\tau\Big)\Big\|_{V^*} + \Big\|F\Big(\int^0_\cdot e^{\lambda(\cdot-\tau)}\varphi(\tau)d\tau\Big)\Big\|_{V^*}\\
\le &\,\, \|\lambda I -A\|_{{\mathscr L}(V, V^*)}\|D\|_{{\mathscr L}(L^2([-r, 0], V), V)}\Big\|\int^0_\cdot e^{\lambda(\cdot-\tau)}\varphi(\tau)d\tau\Big\|_{L^2([-r, 0], V)}\\
&\,\, + \|F\|_{{\mathscr L}(L^2([-r, 0], V), V^*)}\Big\|\int^0_\cdot e^{\lambda(\cdot-\tau)}\varphi(\tau)d\tau\Big\|_{L^2([-r, 0], V)}\\
\le &\,\, c_2(\lambda)\Big[\|\lambda I -A\|_{{\mathscr L}(V, V^*)}\|D\|_{{\mathscr L}(L^2([-r, 0], V), V)} + \|F\|_{{\mathscr L}(L^2([-r, 0], V), V^*)}\Big]\|\varphi\|_{L^2([-r, 0], V)}\\
=: &\,\, c_3(\lambda)\|\varphi\|_{L^2([-r, 0], V)},
\end{split}
\end{equation}
where $c_3(\lambda)>0$.

For $\phi=(\phi_0, \phi_1)= (\phi_1(0)-D\phi_1, \phi_1)\in {\mathscr D}({\cal A})$, we set $\psi= \lambda \phi - {\cal A}\phi$ as in (\ref{28/07/2013(2)}). Then by  virtue of (\ref{28/07/2013(1)}), (\ref{28/07/2013(20)}) and (\ref{07/09/16(1)}), the condition (\ref{05/09/16}) implies that
\begin{equation}
\label{05/09/16(10)}
\begin{split}
\|\phi_1(0)\|_V &\le C_1\|\Delta(\lambda)\phi_1(0)\|_{V^*}\\
&= C_1\|\Sigma_\lambda \psi_1 + \psi_0\|_{V^*}\\
&\le C_1\cdot c_3(\lambda)\|\psi_1\|_{L^2([-r, 0], V)} + C_1\|\psi_0\|_{V^*},
\end{split}
\end{equation}
and, in addition to (\ref{05/09/16(2)}) and (\ref{05/09/16(10)}), that
 \begin{equation}
\label{05/09/16(11)}
\begin{split}
\|D\phi_1\|_V &\le \|De^{\lambda\cdot}\phi_1(0)\|_V + \Big\|D\Big(\int^0_\cdot e^{\lambda(\cdot-\tau)}\psi_1(\tau)d\tau\Big)\Big\|_V\\
&\le \Big(\|D_1\|e^{|\lambda|r} + \|D_2\|\sqrt{r}e^{|\lambda|r}\Big)\|\phi_1(0)\|_V + c_2(\lambda)\|D\|_{{\mathscr L}(L^2([-r, 0], V), V)}\cdot \|\psi_1\|_{L^2([-r, 0], V)}\\
&\le \Big[C_1c_3(\lambda)\Big(\|D_1\|e^{|\lambda|r} + \|D_2\|\sqrt{r}e^{|\lambda|r}\Big) + c_2(\lambda)\|D\|_{L^2([-r, 0], V), V)}\Big]\|\psi_1\|_{L^2([-r, 0], V)}\\
&\,\,\,\,\, + C_1\Big(\|D_1\|e^{|\lambda|r} + \|D_2\|\sqrt{r}e^{|\lambda|r}\Big)\|\psi_0\|_{V^*}\\
&=: c_4(\lambda) \|\psi_1\|_{L^2([-r, 0], V)} + c_5(\lambda)\|\psi_0\|_{V^*}.
\end{split}
\end{equation}
Combining (\ref{05/09/16(10)}) and (\ref{05/09/16(11)}), we thus have
\begin{equation}
\label{05/09/16(30)}
\begin{split}
\|\phi_0\|_V &\le \|\phi_1(0)\|_V + \|D\phi_1\|_V\\
& \le (C_1\cdot c_3(\lambda) + c_4(\lambda)) \|\psi_1\|_{L^2([-r, 0], V)} + (C_1+c_5(\lambda))\|\psi_0\|_{V^*}.
\end{split}
\end{equation}
Now from (\ref{07/09/16(3)}), (\ref{05/09/16(10)}), (\ref{05/09/16(30)}) and the fact that $\|\cdot\|_H\le \nu\|\cdot\|_V$, $\nu>0$, it follows for $\phi\in {\mathscr D}({\cal A})$ that
\begin{equation}
\label{05/09/16(60)}
\begin{split}
\|\phi\|_{\cal H} &\le \sqrt{2}(\|\phi_0\|_H + \|\phi_1\|_{L^2([-r, 0], V)})\\
&\le  \sqrt{2}\beta(C_1\cdot c_3(\lambda) + c_4(\lambda))\|\psi_1\|_{L^2([-r, 0], V)} + \sqrt{2}\nu (C_1+c_5(\lambda))\|\psi_0\|_{V^*}\\
&\,\,\,\,\, + \sqrt{2}c_1(\lambda)\|\phi_1(0)\|_V + \sqrt{2}c_2(\lambda)\|\psi_1\|_{L^2([-r, 0], V)}\\
&\le \Big( \sqrt{2}\nu (C_1\cdot c_3(\lambda) + c_4(\lambda)) + \sqrt{2}c_2(\lambda) + \sqrt{2}c_1(\lambda)C_1c_3(\lambda)\Big)\|\psi_1\|_{L^2([-r, 0], V)}\\
&\,\,\,\,\, + \Big(\sqrt{2}c_1(\lambda)C_1^2c_3(\lambda) + \sqrt{2}\nu (C_1+c_5(\lambda)\Big)\|\psi_0\|_{V^*}\\
&=: c_6(\lambda)\|\psi_1\|_{L^2([-r, 0], V)} + c_7(\lambda)\|\psi_0\|_{V^*}.
\end{split}
\end{equation}
Since $\|\cdot\|_{V^*}\le \nu\|\cdot\|_H$ for some $\nu>0$, it further follows from (\ref{05/09/16(60)}) that
\[
\begin{split}
\|\phi\|^2_{\cal H} &\le 2c^2_6(\lambda)\|\psi_1\|^2_{L^2([-r, 0], V)} + 2c^2_7(\lambda)\nu\|\psi_0\|^2_H\\
& = 2(c^2_6(\lambda)+c^2_7(\lambda))\|\psi\|^2_{\cal H}=2(c^2_6(\lambda)+c^2_7(\lambda))\|\lambda \phi -{\cal A}\phi\|^2_{\cal H},
\end{split}
\]
with $c_6(\lambda)>0,\, c_7(\lambda)>0$. The proof is thus complete.
\end{proof}

\begin{proposition}
Let $\lambda\in {\mathbb C}$. If $(\lambda -{\cal A}){\mathscr D}({\cal A})$ is dense in ${\cal H}$, then $\Delta(\lambda)V$ is dense in $V^*$.
\end{proposition}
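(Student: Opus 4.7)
The plan is to exploit the explicit formula (\ref{28/07/2013(1)}) from Proposition \ref{14/08/2013(70)}, which expresses $\Delta(\lambda)\phi_1(0)$ in terms of the data $(\psi_0,\psi_1)=(\lambda I-\mathcal{A})\phi$. More precisely, using the operator $\Sigma_\lambda\in\mathscr{L}(L^2([-r,0],V),V^*)$ introduced in (\ref{07/09/2016(10)}), that identity reads
\[
\Delta(\lambda)\phi_1(0)=\Sigma_\lambda\psi_1+\psi_0,\qquad \phi_1(0)\in V.
\]
So to approximate a prescribed $\psi_0\in V^*$ by elements of $\Delta(\lambda)V$, I will produce $\phi\in\mathscr{D}(\mathcal{A})$ whose image under $\lambda I-\mathcal{A}$ is close to $(\psi_0,0)$, and then the $\Sigma_\lambda\psi_1$ term will be forced to be small in $V^*$ by boundedness of $\Sigma_\lambda$.

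First I would fix $\psi_0\in V^*$ and $\varepsilon>0$. Since $H\hookrightarrow V^*$ is dense by (\ref{13/04/16(100)}), I can pick $\widetilde{\psi}_0\in H$ with $\|\psi_0-\widetilde{\psi}_0\|_{V^*}<\varepsilon/2$. Then $(\widetilde{\psi}_0,0)\in\mathcal{H}$, and by the hypothesis that $(\lambda I-\mathcal{A})\mathscr{D}(\mathcal{A})$ is dense in $\mathcal{H}$, I can choose $\phi=(\phi_0,\phi_1)\in\mathscr{D}(\mathcal{A})$ such that, writing $(\lambda I-\mathcal{A})\phi=(\widehat{\psi}_0,\widehat{\psi}_1)\in\mathcal{H}$, the quantity
\[
\|\widehat{\psi}_0-\widetilde{\psi}_0\|_H^2+\|\widehat{\psi}_1\|_{L^2([-r,0],V)}^2
\]
is as small as I wish. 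By continuity of $H\hookrightarrow V^*$ and by (\ref{07/09/16(1)}), both $\|\widehat{\psi}_0-\widetilde{\psi}_0\|_{V^*}$ and $\|\Sigma_\lambda\widehat{\psi}_1\|_{V^*}$ can be made $<\varepsilon/4$.

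Now I would apply the direct (``only if'') half of Proposition \ref{14/08/2013(70)} to $\phi$: with $x:=\phi_1(0)\in V$, identity (\ref{28/07/2013(1)}) becomes
\[
\Delta(\lambda)x=\Sigma_\lambda\widehat{\psi}_1+\widehat{\psi}_0.
\]
The triangle inequality in $V^*$ then yields
\[
\|\Delta(\lambda)x-\psi_0\|_{V^*}\le \|\Sigma_\lambda\widehat{\psi}_1\|_{V^*}+\|\widehat{\psi}_0-\widetilde{\psi}_0\|_{V^*}+\|\widetilde{\psi}_0-\psi_0\|_{V^*}<\varepsilon.
\]
Since $\varepsilon>0$ was arbitrary, $\psi_0$ lies in the $V^*$-closure of $\Delta(\lambda)V$, proving the claim.

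The only non-routine point is ensuring that although $\mathcal{A}$-density is formulated in the $\mathcal{H}$-topology (which involves $H$ in the first slot and $L^2([-r,0],V)$ in the second), the conclusion we need is in the weaker $V^*$-topology; this is precisely what allows the two reductions above (approximating $\psi_0\in V^*$ by elements of $H$, and passing from $L^2([-r,0],V)$-convergence of $\widehat{\psi}_1$ to $V^*$-convergence of $\Sigma_\lambda\widehat{\psi}_1$ via the already-established boundedness $\Sigma_\lambda\in\mathscr{L}(L^2([-r,0],V),V^*)$) to go through without any further regularity issues.
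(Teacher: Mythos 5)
Your proof is correct and follows essentially the same route as the paper's: both rest on the identity $\Delta(\lambda)\phi_1(0)=\Sigma_\lambda\psi_1+\psi_0$ from Proposition 3.1, the boundedness of $\Sigma_\lambda\in{\mathscr L}(L^2([-r,0],V),V^*)$, the continuity of $H\hookrightarrow V^*$, and the density of $H$ in $V^*$. The only cosmetic difference is that you approximate a general $\psi_0\in V^*$ by an element of $H$ at the outset, whereas the paper first shows every element of $H$ lies in the $V^*$-closure of $\Delta(\lambda)V$ and only then invokes the density of $H$ in $V^*$.
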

\begin{proof}
For any $\psi_0\in H$, let $\psi=(\psi_0, 0)\in {\cal H}$. Then by assumption for each $\varepsilon>0$, there exist $\psi_\varepsilon=(\psi_{0, \varepsilon}, \psi_{1, \varepsilon})\in {\cal H}$ and $\phi_\varepsilon= (\phi_{0, \varepsilon}, \phi_{1, \varepsilon})\in {\mathscr D}({\cal A})$ such that
\begin{equation}
\label{11/09/16(2)}
\lambda\phi_\varepsilon -{\cal A}\phi_\varepsilon=\psi_\varepsilon\hskip 20pt \hbox{and}\hskip 20pt \|\psi_\varepsilon -\psi\|_{\cal H}<\varepsilon.
\end{equation}
From (\ref{28/07/2013(1)}), we have
\[
\Delta(\lambda)\phi_{1, \varepsilon}(0)= \Sigma_\lambda\psi_{1, \varepsilon}+ \psi_{0, \varepsilon},\]
where the operator $\Sigma_\lambda$ is given in (\ref{07/09/2016(10)}). Therefore,
\[
\|\psi_0- \Sigma_\lambda \psi_{1, \varepsilon} - \psi_{0, \varepsilon}\|_{V^*}\le \|\psi_0-\psi_{0, \varepsilon}\|_{V^*} + \|\Sigma_\lambda\psi_{1, \varepsilon}\|_{V^*},\]
and from (\ref{11/09/16(2)}) and the relation $\|\cdot\|_{V^*}\le \nu \|\cdot\|_H$ for some $\nu>0$, we have
\[
\|\psi_0-\psi_{0, \varepsilon}\|_{V^*} \le \nu \|\psi_0-\psi_{0, \varepsilon}\|_H <\varepsilon \nu,
\]
and again from  (\ref{11/09/16(2)}), it follows that
\[
\|\psi_{1, \varepsilon}\|_{L^2([-r, 0], V)}<\varepsilon.\]
Hence, we have the relations
\[
\|\Sigma_\lambda \psi_{1, \varepsilon}\|\le \|\Sigma_\lambda\|\cdot\|\psi_{1, \varepsilon}\|_{L^2([-r, 0], V)}\le \|\Sigma_\lambda\|\varepsilon.\]
In other words, we just show that for any $\psi_0\in H$ and $\varepsilon>0$, there exists $\Sigma_\lambda\psi_{1, \varepsilon} + \psi_{0, \varepsilon}\in \Delta(\lambda)V$ such that
\[
\|\psi_0 -(\Sigma_\lambda \psi_{1, \varepsilon} + \psi_{0, \varepsilon})\|_{V^*} \le \varepsilon(\nu + \|\Sigma_\lambda\|).\]
Since $H$ is dense in $V^*$, the desired result is thus proved.
\end{proof}

\section{\large Spectrum and Stationary Solution}

 First, let us consider the following deterministic functional differential equation of neutral type in $V^*$,
\begin{equation}
\label{11/08/2013(303)}
\begin{cases}
d\Big(y(t) - \alpha_1 y(t-r) - \displaystyle\int^0_{-r}\gamma(\theta)y(t+\theta)d\theta\Big)  =A\Big(y(t) -  \alpha_1 y(t-r) - \displaystyle\int^0_{-r}\gamma(\theta)y(t+\theta)d\theta\Big)dt\\
\hskip 200pt   +\, \alpha_2 A y(t-r)dt +\displaystyle\int^0_{-r} \beta(\theta)A y(t+\theta)d\theta dt,\,\,\,\,t\ge 0,\\
y(0)=\phi_0,\,\,\,\,y_0=\phi_1,\,\,\,\phi=(\phi_0, \phi_1)\in {\cal H},
\end{cases}
\end{equation}
where $\alpha_1,\,\alpha_2\in {\mathbb R}$ and $\beta,\,\gamma\in L^2([-r, 0]; {\mathbb R})$. By virtue of (\ref{05/06/06(1178)}) and Theorem \ref{06/03/2017(1)}, the equation (\ref{11/08/2013(303)}) can be equivalently lifted up into a deterministic equation without time delay
\begin{equation}
\begin{cases}
dY(t) ={\cal A}Y(t)dt, \,\,\,\,t\ge 0,\\
Y(0)=\phi\in {\cal H},
\end{cases}
\end{equation}
where ${\cal A}$ is the generator given in Theorem \ref{06/03/2017(1)}
and $Y(t)=(y(t), y_t)$ for all $t\ge 0$. On the other hand,  the characteristic operator $\Delta(\lambda)$  defined in
(\ref{06/11/16(1)})
 is given in this case  by
\[
\begin{split}
\Delta(\lambda)x =&\,\, \Big(1 - \alpha_1 e^{-\lambda r} - \displaystyle \int^0_{-r}\gamma(\theta)e^{\lambda \theta}d\theta \Big)(\lambda I -A)x -\alpha_2 e^{-\lambda r}Ax - \int^0_{-r} \beta(\theta)e^{\lambda \theta}d\theta Ax\\
=:&\,\, m(\lambda)x -n(\lambda)Ax\hskip 15pt \hbox{for each}\hskip 15pt \lambda\in {\mathbb C},\,\,x\in V,
\end{split}
\]
where
\begin{equation}
\label{12/09/16(1)}
m(\lambda) = \lambda\Big(1 -\alpha_1 e^{-\lambda r}-\int^0_{-r}\gamma(\theta)e^{\lambda\theta}d\theta\Big),\hskip 15pt \lambda\in {\mathbb C},
\end{equation}
and
\begin{equation}
\label{23/08/13(212)}
n(\lambda) = 1- \alpha_1 e^{-\lambda r} - \int^0_{-r} \gamma(\theta)e^{\lambda \theta}d\theta + \alpha_2 e^{-\lambda r} + \int^0_{-r} \beta(\theta) e^{\lambda\theta}d\theta,\hskip 15pt \lambda\in {\mathbb C}.
\end{equation}
\begin{definition} \rm
The {\it point spectrum\/} $\sigma_p(\Delta)$  is defined to be the set
\[
\sigma_p(\Delta) =\{\lambda\in {\mathbb C}:\, \Delta(\lambda)\,\,\,\hbox{is not injective}\},\]
  the {\it continuous spectrum\/} $\sigma_c(\Delta)$ is defined by
\begin{equation}
\nonumber
\begin{split}
\sigma_c(\Delta) = \big\{\lambda\in {\mathbb C}:\, \Delta(\lambda)\,\,\hbox{is injective}\,\,{\mathscr R}(\Delta(\lambda))\not= V^*\,\,\hbox{and}\,\,\overline{{\mathscr R}(\Delta(\lambda))}=V^*\big\},
\end{split}
\end{equation}
and the  {\it  residual spectrum\/} $\sigma_r(\Delta)$ is defined by
\[
\sigma_r(\Delta) = \big\{\lambda\in {\mathbb C}:\, \Delta(\lambda)\,\,\hbox{is injective and}\,\,\overline{{\mathscr R}(\Delta(\lambda))}\not= V^*\big\}.\]
\end{definition}
\noindent Note that $\lambda\in \sigma_p(\Delta)$ if and only if there exists a nonzero $x\in V$ such that $\Delta(\lambda)x=0$. The value $\lambda\in \sigma_p(\Delta)$ is often called a {\it characteristic value\/} of $\Delta$.

Let $\sigma_p(A),\,\sigma_c(A)$ and $\sigma_r(A)$ and $\sigma_p({\cal A}),\,\sigma_c({\cal A})$ and $\sigma_r({\cal A})$ denote the point, continuous and residual spectrum sets of $A$ and ${\cal A}$, respectively. In connection with (\ref{12/09/16(1)}) and (\ref{23/08/13(212)}), we further define
\begin{equation}
\label{01/09/13(10)}
\begin{cases}
\Gamma_c = \{\lambda\in {\mathbb C}:\, n(\lambda)\not = 0,\,m(\lambda) n(\lambda)^{-1}\in \sigma_c(A)\},\\
\Gamma_r = \{\lambda\in {\mathbb C}:\, n(\lambda)\not = 0,\,m(\lambda) n(\lambda)^{-1}\in \sigma_r(A)\},\\
\Gamma_p = \{\lambda\in {\mathbb C}:\, n(\lambda)\not = 0,\,m(\lambda) n(\lambda)^{-1}\in \sigma_p(A)\},\\
\Gamma_0 = \{\lambda\in {\mathbb C}:\, \lambda\not = 0,\,n(\lambda)=0\},\\
\Gamma_1 = \{\lambda\in {\mathbb C}:\, n(\lambda)\not=0,\,m(\lambda) n(\lambda)^{-1}\in \sigma(A)\}.\\
\end{cases}
\end{equation}
By using Propositions 3.1--3.5, one can obtain the following result whose proof is similar to that one of Theorem 3.9 in \cite{dbgkkes85}.
\begin{proposition}
\label{22/08/13(2)}
 For the characteristic operator $\Delta(\lambda)$ and the associated generator ${\cal A}$ of the equation (\ref{11/08/2013(303)}), we have

(i) $\Gamma_0\subset \sigma_c({\cal A}) \subset \sigma_c(\Delta) =\Gamma_c\cup \Gamma_0;$

(ii) $\sigma_r({\cal A}) = \sigma_r(\Delta) = \Gamma_r;$

(iii)
\[
\sigma_p({\cal A}) = \sigma_p(\Delta) =
\begin{cases}
\Gamma_p\hskip 20pt &\hbox{if}\hskip 20pt 1 -\alpha_1 + \alpha_2 + \displaystyle\int^0_{-r} (\beta(\theta)- \gamma(\theta))d\theta\not= 0,\\
\Gamma_p\cup \{0\}\hskip 20pt &\hbox{if}\hskip 20pt 1 -\alpha_1 + \alpha_2 + \displaystyle\int^0_{-r} (\beta(\theta)- \gamma(\theta))d\theta= 0.
\end{cases}
\]
\end{proposition}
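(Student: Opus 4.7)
The plan is to combine the five preceding propositions (3.1--3.5), which constitute a spectral dictionary between $\lambda I-\mathcal{A}:\mathscr{D}(\mathcal{A})\to\mathcal{H}$ and $\Delta(\lambda):V\to V^*$, with the elementary factorisation $\Delta(\lambda)=m(\lambda)I-n(\lambda)A$ that reduces spectral questions for $\Delta$ to spectral questions for $A$. The argument splits naturally into a transfer step $\mathcal{A}\leftrightarrow\Delta$, an analysis of $\Delta$ via the factorisation, and a separate look at $\lambda=0$.

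First I would extract from Propositions 3.1--3.5 the identities $\sigma_p(\mathcal{A})=\sigma_p(\Delta)$, $\sigma_r(\mathcal{A})=\sigma_r(\Delta)$ and the inclusion $\sigma_c(\mathcal{A})\subset\sigma_c(\Delta)$. The point-spectrum equality is Proposition 3.2 applied directly. For the residual spectrum, the contrapositive of Proposition 3.5 yields $\sigma_r(\Delta)\subset\sigma_r(\mathcal{A})$, and Proposition 3.3 combined with the explicit formulas (\ref{28/07/2013(1)})--(\ref{07/09/2016(10)}) (which transport non-density of range from $V^*$ up to $\mathcal{H}$) delivers the reverse inclusion. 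Bijectivity with bounded inverse for $\Delta(\lambda)$ forces the same for $\lambda I-\mathcal{A}$ by Propositions 3.3 and 3.4, whence $\sigma_c(\mathcal{A})\subset\sigma_c(\Delta)$.

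Next I would analyse $\sigma_\ast(\Delta)$ directly. When $n(\lambda)\neq 0$, the factorisation $\Delta(\lambda)=n(\lambda)\bigl(\mu(\lambda)I-A\bigr)$ with $\mu(\lambda):=m(\lambda)/n(\lambda)$ shows that $\Delta(\lambda)$ and $\mu(\lambda)I-A:V\to V^*$ share injectivity, density of range and bounded invertibility; hence $\lambda$ lies in $\sigma_p(\Delta)$, $\sigma_c(\Delta)$ or $\sigma_r(\Delta)$ exactly when $\mu(\lambda)$ lies in the matching part of $\sigma(A)$, producing $\Gamma_p$, $\Gamma_c$ and $\Gamma_r$ respectively. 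When $n(\lambda)=0$ and $\lambda\neq 0$, $\Delta(\lambda)$ collapses to the scalar multiplication $m(\lambda)I:V\to V^*$; since the Gelfand embedding $V\hookrightarrow V^*$ is dense but not surjective, this map is injective with dense proper range, placing all of $\Gamma_0$ in $\sigma_c(\Delta)$. At $\lambda=0$ one has $m(0)=0$ and $\Delta(0)=-n(0)A$, and G\r{a}rding's inequality (\ref{29/01/2013(390)}) makes $A:V\to V^*$ a bounded bijection; thus $\Delta(0)$ is bijective iff $n(0)=1-\alpha_1+\alpha_2+\int_{-r}^0(\beta-\gamma)\,d\theta\neq 0$, whereas $n(0)=0$ gives $\Delta(0)\equiv 0$ and hence $0\in\sigma_p(\Delta)\setminus\Gamma_p$. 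Assembling the subcases yields $\sigma_c(\Delta)=\Gamma_c\cup\Gamma_0$, $\sigma_r(\Delta)=\Gamma_r$ and $\sigma_p(\Delta)=\Gamma_p$ or $\Gamma_p\cup\{0\}$ according to the sign of $n(0)$. Pushing these identities back through the dictionary of the first step then produces the stated decomposition of $\sigma_{\ast}(\mathcal{A})$.

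The hard part will be the careful bookkeeping between the transfer step and the direct analysis: one must guarantee that $\lambda\in\Gamma_0$, where $\Delta(\lambda)$ has dense but proper range, actually lands in $\sigma_c(\mathcal{A})$ rather than slipping into $\sigma_r(\mathcal{A})$, and one must verify the reverse inclusion $\sigma_c(\Delta)\subset\sigma_c(\mathcal{A})$, which Propositions 3.1--3.5 do not furnish directly. Both points require contrapositive readings of Propositions 3.3 and 3.4 combined with the density of $V$ in $V^*$, and a short additional argument lifting density of range from $V^*$ up to $\mathcal{H}$ through the solution formula (\ref{28/07/2013(5)}).
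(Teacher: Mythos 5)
Your proposal follows essentially the same route the paper intends: the paper gives no proof of its own, saying only that the result follows from Propositions 3.1--3.5 in the manner of Theorem 3.9 of Di Blasio--Kunisch--Sinestrari, which is precisely your plan of transferring spectral data between $\lambda I-{\cal A}$ and $\Delta(\lambda)$ and then reading off $\sigma_\ast(\Delta)$ from the factorisation $\Delta(\lambda)=m(\lambda)I-n(\lambda)A$ together with the special cases $n(\lambda)=0$ and $\lambda=0$. The only points to watch are the one you already flag (lifting density of range from $V^*$ up to ${\cal H}$ so that $\Gamma_0$ lands in $\sigma_c({\cal A})$ and not in $\sigma_r({\cal A})$) and the tacit assumption that $m(\lambda)\neq 0$ for $\lambda\in\Gamma_0$, an assumption the paper's statement also makes implicitly.
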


 If $\alpha_1=0$, $\gamma(\cdot)\equiv 0$ and $\beta(\cdot)\equiv 0$, then the equation (\ref{11/08/2013(303)}) reduces to a simple form
\begin{equation}
\label{11/08/2013(30378)}
\begin{cases}
dy(t) =Ay(t)dt + \alpha_2 A y(t-r)dt,\,\,\,\,t\ge 0,\\
y(0)=\phi_0,\,\,\,\,y_0=\phi_1,\,\,\,\phi=(\phi_0, \phi_1)\in {\cal H}.
\end{cases}
\end{equation}
Let us suppose at present that $A$ is some linear operator, e.g., Laplace operator, in conjunction with the form $a(\cdot, \cdot)$ in (\ref{29/01/2013(390)}) to generate a compact semigroup. It was shown, however,  by Di Blasio, Kunisch and Sinestrari \cite{dbgkkes85} that the associated $C_0$-semigroup $e^{t{\cal A}}$, $t\ge 0$, is generally not compact or even not eventually norm continuous, as shown by Jeong \cite{jj91}. A direct consequence of this fact is that we cannot use the well-known spectral mapping theorem to establish stability, based on the  spectrum knowledge of ${\cal A}$ for system (\ref{11/08/2013(30378)}).

 Bearing the above statement in mind, let us consider the following version of equation (\ref{11/08/2013(303)}) with distributed delay by taking $\alpha_1=\alpha_2=0$,
\begin{equation}
\label{11/08/2013(30070)}
\begin{cases}
d\Big(y(t)  - \displaystyle\int^0_{-r}\gamma(\theta)y(t+\theta)d\theta\Big)  =A\Big(y(t) - \displaystyle\int^0_{-r}\gamma(\theta)y(t+\theta)d\theta\Big)dt\\
\hskip 170pt + \displaystyle\int^0_{-r} \beta(\theta)A y(t+\theta)d\theta,\,\,\,\,t\ge 0,\\
y(0)=\phi_0,\,\,\,\,y_0=\phi_1,\,\,\,\phi=(\phi_0, \phi_1)\in {\cal H}.
\end{cases}
\end{equation}
It was shown by Liu \cite{kl2016(2)}
 that when the weight functions $\gamma(\cdot)$, $\beta(\cdot)$ satisfy
 \begin{equation}
 \label{06/11/16(3)}
 \gamma(\cdot)\in L^2([-r, 0], {\mathbb C}),\hskip 20pt \beta(\cdot)\in L^2([-r, 0], {\mathbb C}),
 \end{equation}
   the associated solution semigroup $e^{t{\cal A}}$, $t\ge 0$, in (\ref{11/08/2013(303)}) is eventually norm continuous for $t>r$, i.e, $e^{\cdot{\cal A}}:\, [0, \infty)\to {\mathscr L}({\cal H})$ is continuous on $(r, \infty)$, which implies further that the spectral mapping theorem  is fulfilled,
    \[
    \sup\{\hbox{Re}\,\lambda:\, \lambda\in \sigma({\cal A})\}=\inf\{\mu\in{\mathbb R}:\, \|e^{t{\cal A}}\|\le Me^{\mu t}\,\,\hbox{for some}\,\,M>0\}.
    \]

 Now let us consider the following stochastic functional differential equation of neutral type with additive noise,
    \begin{equation}
\label{11/08/2013(30070804)}
\begin{cases}
d\Big(y(t)  - \displaystyle\int^0_{-r}\gamma(\theta)y(t+\theta)d\theta\Big)  =A\Big(y(t) - \displaystyle\int^0_{-r}\gamma(\theta)y(t+\theta)d\theta\Big)dt\\
\hskip 170pt + \displaystyle\int^0_{-r} \beta(\theta)A y(t+\theta)d\theta + bdw(t),\,\,\,\,t\ge 0,\\
y(0)=\phi_0,\,\,\,\,y_0=\phi_1,\,\,\,\phi=(\phi_0, \phi_1)\in {\cal H},
\end{cases}
\end{equation}
 where $b\in H$ and $w(\cdot)$ is a standard real-valued Brownian motion.
 We can re-write (\ref{11/08/2013(30070804)}) as a stochastic differential equation without time delay in ${\cal H}$,
 \begin{equation}
 \label{07/03/2017(1)}
\begin{cases}
dY(t) ={\cal A}Y(t)dt + {\cal B}dw(t), \,\,\,\,t\ge 0,\\
Y(0)=\phi\in {\cal H},
\end{cases}
\end{equation}
where ${\cal A}$ is the generator given in Theorem \ref{06/03/2017(1)}, ${\cal B}=(b, 0)\in {\cal H}$
and $Y(t)=(y(t), y_t)$ for all $t\ge 0$. For equation (\ref{07/03/2017(1)}), if we can find conditions  by showing
\begin{equation}
\label{22/08/13(20)}
\sup\{Re\,\lambda:\, \lambda\in \sigma({\cal A})\}<0,
\end{equation}
to secure an exponentially stable semigroup $e^{t{\cal A}}$, $t\ge 0$, then we will obtain a unique stationary solution to the equation (\ref{11/08/2013(30070804)})  (cf, e.g., Pr\'ev\^ot and R\"ockner \cite{cpmr2007}).

\begin{proposition}
\label{27/08/13(40)}
 Suppose that the spectrum set $\sigma(A)\subset (-\infty, -c_0]$ for some $c_0>0$ and the functions $\gamma,\,\beta$ in (\ref{11/08/2013(30070804)}) satisfy
 \begin{equation}
 \label{23/03/17(1)}
 \|\gamma\|_{L^1([-r, 0], {\mathbb R})} +  \|\beta\|_{L^1([-r, 0], {\mathbb R})}<1.
 \end{equation}
Then there exists a unique stationary solution for the equation   (\ref{11/08/2013(30070804)}).
\end{proposition}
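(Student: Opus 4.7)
The plan is to translate the original delay problem into the non-delay stochastic evolution equation (\ref{07/03/2017(1)}) in the product space ${\cal H}$ via the lifting developed in Section~2, and then to conclude the existence and uniqueness of a stationary solution by verifying that the lifted semigroup $e^{t{\cal A}}$ is exponentially stable. Once exponential stability is in hand, the claim follows from the standard Ornstein--Uhlenbeck theory for linear stochastic evolution equations with additive noise in a Hilbert space (e.g., Pr\'ev\^ot and R\"ockner \cite{cpmr2007}); indeed, the unique invariant Gaussian measure produces the desired stationary solution, and any two stationary solutions must share the same finite-dimensional distributions by the dissipativity of the deterministic dynamics.

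Because $e^{t{\cal A}}$ is eventually norm continuous for $t>r$ under condition (\ref{06/11/16(3)}), the spectral mapping theorem quoted right before the proposition yields $\omega_0({\cal A})=s({\cal A}):=\sup\{\mathrm{Re}\,\lambda:\lambda\in\sigma({\cal A})\}$. Thus the whole task reduces to establishing $s({\cal A})<0$. I will invoke Proposition~\ref{22/08/13(2)} (applied with $\alpha_1=\alpha_2=0$), so that it suffices to show that none of $\Gamma_0$, $\Gamma_p$, $\Gamma_c$, $\Gamma_r$ intersects the closed half-plane $\{\mathrm{Re}\,\lambda\ge -\delta\}$ for some uniform $\delta>0$, and that $0$ is not an eigenvalue. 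The latter is automatic from the hypothesis: with $\alpha_1=\alpha_2=0$, one has $1+\int^0_{-r}(\beta(\theta)-\gamma(\theta))d\theta\in(0,2)$ because $\|\gamma\|_{L^1}+\|\beta\|_{L^1}<1$, so by Proposition~\ref{22/08/13(2)} (iii) we get $0\notin\sigma_p({\cal A})$.

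The core estimate is then as follows. Set $a(\lambda)=\int^0_{-r}\gamma(\theta)e^{\lambda\theta}d\theta$, $b(\lambda)=\int^0_{-r}\beta(\theta)e^{\lambda\theta}d\theta$, so that $m(\lambda)=\lambda(1-a(\lambda))$ and $n(\lambda)=1-a(\lambda)+b(\lambda)$. For $\mathrm{Re}\,\lambda\ge-\delta$ with $\delta>0$, the trivial bound $|e^{\lambda\theta}|\le e^{\delta r}$ on $\theta\in[-r,0]$ gives $|a(\lambda)|\le e^{\delta r}\|\gamma\|_{L^1}$ and $|b(\lambda)|\le e^{\delta r}\|\beta\|_{L^1}$. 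Choosing $\delta$ so small that $e^{\delta r}(\|\gamma\|_{L^1}+\|\beta\|_{L^1})<1$ immediately yields $|n(\lambda)|\ge 1-e^{\delta r}(\|\gamma\|_{L^1}+\|\beta\|_{L^1})>0$, hence $\Gamma_0\cap\{\mathrm{Re}\,\lambda\ge-\delta\}=\emptyset$. For $\Gamma_p\cup\Gamma_c\cup\Gamma_r$, since $\sigma(A)\subset(-\infty,-c_0]$, I suppose for contradiction that $m(\lambda)/n(\lambda)=-\mu$ with $\mu\ge c_0$; the identity $(\lambda+\mu)(1-a(\lambda))=-\mu b(\lambda)$, combined with $|\lambda+\mu|\ge\mu+\mathrm{Re}\,\lambda\ge\mu-\delta$ and $|1-a(\lambda)|\ge 1-e^{\delta r}\|\gamma\|_{L^1}$, forces
\[
(\mu-\delta)\bigl(1-e^{\delta r}\|\gamma\|_{L^1}\bigr)\le \mu\, e^{\delta r}\|\beta\|_{L^1},
\]
i.e., $\mu\bigl(1-e^{\delta r}(\|\gamma\|_{L^1}+\|\beta\|_{L^1})\bigr)\le \delta$. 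Shrinking $\delta$ further so that $\delta/(1-e^{\delta r}(\|\gamma\|_{L^1}+\|\beta\|_{L^1}))<c_0$ produces a contradiction.

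The main obstacle is the last step: turning the soft inequality $\mathrm{Re}\,\lambda<0$ on the spectrum into the quantitative bound $s({\cal A})\le-\delta<0$ needed for exponential stability. The delicate point is the uniformity in $\lambda$ as $|\mathrm{Im}\,\lambda|\to\infty$, which is what fails for general neutral systems with point delays in the top-order derivative; here the $L^1$-integrability of $\beta$ and $\gamma$ together with the Riemann--Lebesgue-type decay of $a(\lambda),b(\lambda)$ as $|\mathrm{Im}\,\lambda|\to\infty$ is exactly what the distributed delay structure provides, and the slab estimate above captures this uniformly. Once $s({\cal A})\le-\delta$ is secured, I will invoke the spectral mapping theorem to obtain $\|e^{t{\cal A}}\|_{{\mathscr L}({\cal H})}\le Me^{-\delta t/2}$ for some $M>0$, and finally construct the stationary solution of (\ref{07/03/2017(1)}) as the stationary Ornstein--Uhlenbeck process associated with $({\cal A},{\cal B})$ whose first component gives the required stationary solution of (\ref{11/08/2013(30070804)}).
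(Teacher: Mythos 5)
Your proposal is correct and follows the same overall architecture as the paper: lift to the product space ${\cal H}$, invoke eventual norm continuity of $e^{t{\cal A}}$ to get the spectral mapping theorem, reduce everything to showing $\sup\{\mathrm{Re}\,\lambda:\lambda\in\sigma({\cal A})\}<0$ via the sets $\Gamma_0$ and $\Gamma_1$, and finish with the standard Ornstein--Uhlenbeck argument from Pr\'ev\^ot--R\"ockner. Where you genuinely diverge is in the key spectral estimate. The paper argues by contradiction with sequences approaching the imaginary axis: for $\Gamma_0$ it uses dominated convergence, and for $\Gamma_1$ it takes \emph{real parts} of the characteristic equation and must additionally invoke the boundedness of $|\mathrm{Im}\,\lambda|$ on the relevant part of the spectrum (citing B\'atkai--Piazzera, Th.\ 2.22) to close the estimate. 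You instead work directly on the slab $\{\mathrm{Re}\,\lambda\ge-\delta\}$, use the uniform bound $|e^{\lambda\theta}|\le e^{\delta r}$, and extract the algebraic identity $(\lambda+\mu)(1-a(\lambda))=-\mu\,b(\lambda)$, from which a pure modulus estimate yields $\mu\bigl(1-e^{\delta r}(\|\gamma\|_{L^1}+\|\beta\|_{L^1})\bigr)\le\delta$ and hence a contradiction with $\mu\ge c_0$ for small $\delta$. This buys you two things: the uniform strip $\mathrm{Re}\,\lambda\le-\delta$ comes out explicitly rather than through a soft limiting argument, and you avoid the external input on bounded imaginary parts entirely. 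Your observation that $0\notin\sigma_p({\cal A})$ because $1+\int_{-r}^{0}(\beta-\gamma)\,d\theta\neq0$ under (\ref{23/03/17(1)}) is a small point the paper leaves implicit, and it is handled correctly. I see no gap.
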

\begin{proof}  Note that from Proposition \ref{22/08/13(2)} we have $\sigma({\cal A}) \subset \Gamma_0\cup \Gamma_1.$
We shall show that under the assumptions in this proposition, there is a constant  $\mu>0$ such that Re$\,\lambda\le -\mu$ for all $\lambda\in \Gamma_0\cup \Gamma_1$ and hence for all $\lambda\in \sigma({\cal A})$.

First, for elements in  $\Gamma_0$, if there exist a sequence $\{\lambda_n\}\subset {\mathbb C}$ such that Re$\,\lambda_n\ge 0$ or Re$\,\lambda_n\to 0$ as $n\to\infty$, then by using (\ref{12/09/16(1)}),  (\ref{23/08/13(212)}) and Dominated Convergence Theorem, we have
\[
\begin{split}
1 &\le \limsup_{n\to\infty}\Big|\int^0_{-r} (\gamma(\theta) + \beta(\theta))e^{\lambda_n\theta}d\theta\Big| \le \int^0_{-r} (|\gamma(\theta)| + |\beta(\theta)|)d\theta<1,
\end{split}
\]
which is clearly a contradiction. Thus the desired result is obtained.

Now we consider the elements of $\sigma({\cal A})$ in $\Gamma_1$.
If there exists a $\lambda$ of $\sigma({\cal A})$ in $\Gamma_1$ such that Re$\,\lambda\ge 0$ (the case that Re$\,\lambda\to 0$ can be similarly proved) with $m(\lambda)/n(\lambda) =: -\delta \le -c_0$, then we get by taking the real part of the equation into account that
\[
\begin{split}
&1 + \frac{Re\, m(\lambda)}{\delta}\\
 &= 1 + \frac{Re\,\lambda -\displaystyle\int^0_{-r} \gamma(\theta)e^{(Re\lambda)\theta}\cos((Im\,\lambda)\theta)d\theta - (Im\,\lambda)\displaystyle\int^0_{-r}\gamma(\theta)e^{(Re\lambda)\theta}\sin((Im\,\lambda)\theta)d\theta}{\delta}\\
& = -\int^0_{-r} \beta(\theta)e^{(Re\,\lambda)\theta}\cos [(Im\,\lambda)\theta]d\theta + \int^0_{-r} \gamma(\theta)e^{(Re\,\lambda)\theta}\cos[(Im\,\lambda)\theta]d\theta.
\end{split}
\]
Since $|Im\,\lambda|\le M$ for some constant $M>0$ (see Th. 2.22, p.35, \cite{absp05}), we get immediately that
\[
\begin{split}
1 \le  1 + \frac{Re\,m(\lambda)}{c_0}
\le&\,\, \Big|\int^0_{-r} (\gamma(\theta) - \beta(\theta))e^{(Re\,\lambda)\theta}\cos [(Im\,\lambda)\theta]d\theta\Big|\\
=&\,\, \int^0_{-r} (|\gamma(\theta)| + |\beta(\theta)|)d\theta <1,
\end{split}
\]
which, once again, yields a contradiction. Combining the above results, we thus obtain that
\[
Re\,\lambda\le -\mu\hskip 10pt \hbox{for some}\hskip 10pt \mu>0\hskip 10pt \hbox{and all}\hskip 10pt \lambda\in\sigma({\cal A}).\]
Therefore, the solution semigroup $e^{t{\cal A}}$, $t\ge 0$, is exponentially stable.
This fact further implies the existence of a unique stationary solution to  (\ref{11/08/2013(30070)}). The proof is  complete.
\end{proof}

\begin{example}\rm
Consider the following initial-boundary value problem of Dirichlet type of the stochastic neutral Laplace equation:
\begin{equation}
\label{22/08/13(7090)}
\begin{cases}
d\Big(y(t, x) - \displaystyle\int^0_{-r}\kappa e^{\mu\theta} y(t+\theta, x)d\theta\Big) = \displaystyle\frac{\partial^2}{\partial x^2}\Big(y(t, x) - \int^0_{-r}\kappa e^{\mu\theta} y(t+\theta, x)d\theta\Big)dt\\
  \hskip 100pt +\displaystyle\int^0_{-r}\alpha\frac{\partial^2 y(t+\theta, x)}{\partial x^2}d\theta dt + b(x)dw(t),\,\,\,\,t\ge 0,\,\,\,\,x\in {\cal O},\\
y(0, \cdot) =\phi_0(\cdot)\in L^2({\cal O}; {\mathbb R}),\\
y(t, \cdot)=\phi_1(t, \cdot)\in H_0^{1}({\cal O}; {\mathbb R}),\,\,\,\,\hbox{a.e.}\,\,\,\,t\in [-r, 0).
\end{cases}
\end{equation}
Here ${\cal O}$ is a bounded open subset of ${\mathbb R}^n$ with regular boundary $\partial {\cal O}$, $\alpha,\,\kappa,\,\mu\in {\mathbb R}$, $r>0$ and $b(\cdot)\in L^2({\cal O}; {\mathbb R})$.

We can re-write (\ref{22/08/13(7090)}) as a stochastic neutral initial boundary problem  (\ref{11/08/2013(30070)}) in the Hilbert space $H= L^2({\cal O}; {\mathbb R})$ by setting
\[
\begin{cases}
A = \displaystyle\frac{\partial^2}{\partial x^2},\\
V= H^{1}_0({\cal O}; {\mathbb R}),\\
\gamma(\theta) = \kappa e^{\mu\theta},\,\,\,\beta(\theta) \equiv  \alpha,\,\,\,\theta\in [-r, 0].
\end{cases}
\]
Then for any random initial $(\phi_0, \phi_1) \in {\cal H}$, there exists a unique strong solution to (\ref{22/08/13(7090)}) defined in $[0, \infty)$. Furthermore, by applying the results derived in this section to (\ref{22/08/13(7090)}), one can obtain a unique stationary solution. In fact, note that $A= \partial^2/\partial x^2$ is a self-adjoint and negative operator in $H$ and its spectrum satisfies $\sigma(A) = \sigma_p(A) \subset (-\infty, -c_0]$ for some $c_0>0$. Then by Proposition \ref{27/08/13(40)} and a direct computation, we obtain that if
\[
|\alpha|< \frac{1}{r}\hskip 20pt\hbox{and}\hskip 20pt |\kappa| \le
\begin{cases}
 e^{r\mu}(1-|\alpha|r)/r\hskip 30pt &\hbox{if}\hskip 15pt \mu\le 0,\\
(1-|\alpha|r)/r\hskip 30pt &\hbox{if}\hskip 15pt  \mu> 0,
\end{cases}
\]
 the associated solution semigroup of  (\ref{22/08/13(7090)}) is exponentially stable. Moreover, the lift-up system  (\ref{07/03/2017(1)}) of equation (\ref{22/08/13(7090)}) in this case has a unique stationary solution.
\end{example}

\vskip 20pt

\noindent {\large\bf References}
\vskip 20pt

\end{document}